\DeclareMathOperator{\Ext}{Ext}
\DeclareMathOperator{\Def}{Def}
\DeclareMathOperator{\Inf}{Inf}
\DeclareMathOperator{\Indinf}{Indinf}
\DeclareMathOperator{\Defres}{Defres}
\newcommand{\IndinfDefres}{\Indinf^G_{A/B}\Iso_{\phi} \Defres_{C/D}^G}
\newcommand{\abs}[1]{\vert#1\vert} 
\newcommand{\gen}[1]{\ensuremath{\left\langle #1 \right\rangle}} 
\newcommand{\pg}[2]{\, ^{#1} \! #2 } %
\newcommand{\motdef}[1]{\textit{#1}}
\newcommand{\tq}{\: | \:}
\newcommand{\st}{\: | \:}
\newcommand*{\marge}[2][\false]{%
\ifx#1\false%
	\textit{#2}\marginpar{\footnotesize #2}
\else%
	\textit{#2}\marginpar{\footnotesize #1}
\fi}
\newenvironment{indice}
{\begin{enumerate}[\textup{(}i\textup{)}] }
{\end{enumerate}} 
\newsavebox{\myQuoteSBOX}
\numberwithin{equation}{section}
\numberwithin{figure}{section}
\numberwithin{table}{section}
\DeclareMathOperator{\Aut}{Aut}
\DeclareMathOperator{\Out}{Out}
\DeclareMathOperator{\im}{Im} 
\DeclareMathOperator{\Iso}{Iso} 
\DeclareMathOperator{\Ker}{Ker} 
\DeclareMathOperator{\SL}{SL} 
\DeclareMathOperator{\Sp}{Sp} 
\DeclareMathOperator{\Ind}{Ind}
\DeclareMathOperator{\Res}{Res}
\newcommand{\cC}{\mathcal{C}}
\newcommand{\cP}{\mathcal{P}}
\newcommand{\IF}{\mathbb{F}}
\newcommand{\IZ}{\mathbb{Z}}
\newtheoremstyle{mythm}
	{10pt} 
	{3pt} 
	{\itshape}    
	{}    
	{\bfseries} 
	{.}   
	{ }   
	{\thmname{#1}\thmnumber{ #2}\thmnote{. #3}}    
\theoremstyle{mythm}
\newtheorem{thm}{Theorem}[section]
\newtheorem{lem}[thm]{Lemma}
\newtheorem{cor}[thm]{Corollary}
\newtheoremstyle{mydef}
	{10pt} 
	{3pt} 
	{}    
	{}    
	{\bfseries} 
	{.}   
	{ }   
	{\thmname{#1}\thmnumber{ #2}\thmnote{. #3}}    
\theoremstyle{mydef}
\newtheorem{rem}[thm]{Remark}
\begin{document}
\title{\textbf{Expansivity and Roquette Groups}}
\author{Alex Monnard \footnote{This work is part of a doctoral thesis prepared at the Ecole Polytechnique F\'ed\'erale de Lausanne, under the
supervision of Prof. Jacques Th\'evenaz.} }
\maketitle
\begin{abstract} 
One looks at  expansive subgroups in particular examples of Roquette groups. This study is motivated by the importance of expansive subgroups in the theory of stabilizing bisets highlighted in \cite{B-T}. In this paper we prove the non-existence of expansive subgroups with trivial $G$-core in different examples of Roquette groups $G$.
\end{abstract}
\textbf{Key words} : Roquette group, expansive subgroup, stabilizing biset.\\
\section{Introduction}
One goal in representation theory is to try to describe representations of a finite group from a subgroup or subquotient of order as small as possible. This has been studied in Green's theory of vertices and sources and Harish-Chandra induction for reductive groups (see for instance \cite{Harish} and \cite{Bouc96}). Another way to do so is to use stabilizing bisets introduced in \cite{B-T}. Indeed, let $k$ be a field, $G$ a finite group, $U$ a $(G,G)$-biset and $L$ a $kG$-module. Then $U$ is said to stabilize $L$ if $U(L) := kU \otimes_{kG} L $ is isomorphic to $L$. If we suppose that $L$ is indecomposable, then one can show that $U$ is of the form $\IndinfDefres:= \Ind_{A}^G \Inf_{A/B}^A \Iso_{\phi} \Def_{C/D}^C \Res_C^G$ for some subgroups $A,B,C,D$ and an isomorphism $\phi : C/D \rightarrow A/B $. In particular, this means that $L$ can be obtained from a representation of $A/B$. 

There is a close link between expansive subgroups and stabilizing bisets as exposed in \cite{B-T}. In particular, Theorem $7.3$ of \cite{B-T} states that if $k$ is a field, $G$ a finite group and $L$ a simple $kG$-module, then there exists an expansive subgroup $T$ of $G$ such that
$$\Indinf_{N_G(T)/T}^G\Defres_{N_G(T)/T}^G(L) \cong L.$$

This theorem proves the existence of stabilizing bisets for simple modules. It is possible that this biset is trivial, i.e. it is reduced to an isomorphism. The proof of the theorem shows that this could only be the case if $G$ is Roquette and $L$ is faithful.

This raises the question of proving the existence, or non-existence, of stabilizing bisets for Roquette groups. To do so, one way is to find expansive subgroups in Roquette groups and then look if it gives non-trivial stabilizing bisets. 

Note that expansive subgroups appear in the study of biset functors given  in \cite{Bouc}. They are used to define rational biset functors. This kind of biset functors has a wide variety of applications such as units of Burnside Rings, the Dade Group and the kernel of the linearization morphism. Although there is a classification of Roquette $p$-groups (see Chapter 5, Section 4 of \cite{Gor}), there are not many results on the existence of expansive or genetic subgroups. Therefore this article has interest beyond the scope of stabilizing bisets.

Let us end this introduction with a short description of the organization of the paper. In the first section one introduces the notions of Roquette groups and expansive subgroups. In particular, one presents the classes of Roquette groups studied in the rest of this paper, which are the first examples of Roquette groups. They give already difficult and interesting examples. Then the following sections are the study of these groups, with first Roquette $p$-groups in section 3, secondly groups with cyclic Fitting subgroup in section 4 and finally groups with extraspecial groups in the Fitting subgroup in section~5.

\section{Roquette groups and expansive subgroups}
We first recall the definition of Roquette groups and expansive subgroups and then one introduces the classes of examples treated in the rest of this paper.
\begin{indice}
\item A subgroup $T$ of a finite group $G$ is called \motdef{expansive} in $G$ if, for every $g \not\in N_G(T)$, the $N_G(T)$-core of the subgroup $\big(\pg{g}{T} \cap N_G(T)\big)T$ contains properly $T$. 
\item A finite group $G$ is said to be a \motdef{Roquette group} if all its normal abelian subgroups are cyclic.
\end{indice}
Let $G$ be a Roquette group and denote by $F(G)$ the Fitting subgroup of $G$, which is the product of the normal subgroups $O_p(G)$ for all primes $p$. As $G$ is Roquette each $O_p(G)$ does not contain a characteristic abelian subgroup which is not cyclic. By Theorem $4.9$ of \cite{Gor} on page $198$, such groups are known. More precisely, each subgroup $O_p(G)$ is the central product of an extraspecial group with a Roquette $p$-group. Roquette $p$-groups are known see Chapter 5, Section 4, in \cite{Gor}, so one starts our study with those groups. Then, one continues with groups with cyclic Fitting subgroup, corresponding to cyclic $O_p(G)$ for every prime $p$. Finally, one treats examples of groups with extraspecial groups in $O_p(G)$.

Motivated by the result of Theorem 9.3 of \cite{B-T}, the general idea is to prove that for the majority of these groups $G$ there is no non-trivial expansive subgroup with trivial $G$-core. To do so, for an arbitrary subgroup $T$ of $G$ with trivial $G$-core one finds a specific element $g$ in $G$ which is not in $N_G(T)$ such that $(\pg{g}{T} \cap N_G(T))$ is contained in $T$. Thus, the $N_G(T)$-core of the subgroup $(\pg{g}{T} \cap N_G(T))T $ is $T$ and so $T$ is not expansive. Remark also that it is equivalent to prove that $T$ is not expansive or $\pg{h}{T}$ is not expansive for a $h \in G$.

Finally, as mentioned in the introduction, our focus will be on stabilizing bisets for faithful modules and thus, by Proposition 1.5 of \cite{Mo}, on the study of expansive subgroups with trivial $G$-core.


\section{Roquette $p$-groups}\index{Roquette $p$-group}
In this section one looks at expansive subgroups in Roquette $p$-groups for $p$ a prime number. Let $P$ be such a group. One knows from Theorem 9.3 of \cite{B-T} that if $U$ is a stabilizing biset for a faithful simple $kP$-module, then $U$ has to be reduced to an isomorphism. Therefore there is no hope to use expansive subgroups to find a stabilizing biset. Nevertheless, as mentioned above we have an interest of understanding this notion. In this first case, an important ingredient is the classification of all Roquette $p$-groups, which we first recall.
\begin{lem} \label{lem91}
Let $p$ be a prime and let $P$ be a Roquette $p$-group of order $p^n$.
\begin{enumerate}
\item If $p$ is odd, then $P$ is cyclic.
\item If $p = 2$, then $P$ is cyclic, generalized quaternion (with $n \geq 3$), dihedral (with $n \geq 4$), or semi-dihedral (with $n \geq 4$).
\item If $P$ is cyclic or generalized quaternion, there is a unique subgroup $Z$ of order $p$. Any non-trivial subgroup contains $Z$.
\item If $P$ is dihedral and $Z = Z(P)$, then any non-trivial subgroup contains $Z$, except for two conjugacy classes of non-central subgroups of order $2$. If $T$ is a non-central subgroup of order $2$, then $S = N_P (T) = TZ$ is a Klein $4$-group and $N_P(S)$ is a (dihedral) group of order $8$.
\item If $P$ is semi-dihedral and $Z = Z(P)$, then any non-trivial subgroup contains $Z$, except for one conjugacy class of non-central subgroups of order $2$. If $T$ is a non-central subgroup of order $2$, then $S = N_P (T) = TZ$ is a Klein $4$-group and $N_P(S)$ is a (dihedral) group of order $8$.
\end{enumerate}
\end{lem}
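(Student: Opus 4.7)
The plan is to leverage the classical classification of Roquette $p$-groups for parts (1) and (2), and then verify the subgroup-theoretic statements in parts (3)--(5) by direct computation from the standard presentations.

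For parts (1) and (2), I would reduce the assertion to the known structure theorem by first observing that the Roquette hypothesis forces the largest normal elementary abelian subgroup of $P$ to be cyclic of order $p$, since any such subgroup is a characteristic (hence normal) abelian subgroup of $P$. The classical theorem on $p$-groups whose normal abelian subgroups are all cyclic (Chapter 5, Section 4 of \cite{Gor}, in particular Theorems 4.3 and 4.10) then yields the four possible isomorphism types. The lower bounds $n \geq 3$ for generalized quaternion and $n \geq 4$ for dihedral and semi-dihedral arise because the omitted small cases ($D_8$ and the would-be $SD_8$) contain a normal Klein four-group and so violate the Roquette condition.

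For parts (3)--(5) I would argue case by case. In the cyclic case the subgroup lattice is a chain, so the unique subgroup of order $p$ is contained in every nontrivial subgroup. For $Q_{2^n} = \langle a,b \mid a^{2^{n-1}} = 1,\ b^2 = a^{2^{n-2}},\ bab^{-1} = a^{-1}\rangle$, a direct computation shows that $a^{2^{n-2}}$ is the only element of order $2$, so any nontrivial subgroup, which must contain an element of prime order, contains $Z = \langle a^{2^{n-2}}\rangle$. For $D_{2^n} = \langle a, b \mid a^{2^{n-1}} = b^2 = 1,\ bab = a^{-1}\rangle$, the non-central involutions are precisely the elements $a^k b$ with $a^k \notin Z$; conjugation by $a$ sends $a^k b \mapsto a^{k+2} b$, producing two $\langle a \rangle$-orbits and therefore exactly two $P$-conjugacy classes. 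For $T = \langle a^k b\rangle$ the centralizer of $a^k b$ inside $\langle a \rangle$ is $Z$, hence $N_P(T) = TZ$ is a Klein four-group; then a short calculation shows that $N_P(TZ) = \langle a^{2^{n-3}}, b\rangle$, a dihedral group of order $8$. The semi-dihedral case is parallel, using the presentation $\langle a, b \mid a^{2^{n-1}} = b^2 = 1,\ bab = a^{2^{n-2}-1}\rangle$: the identity $(a^k b)^2 = a^{k \cdot 2^{n-2}}$ shows that only even $k$ gives an involution, which collapses the two orbits of the dihedral case into a single conjugacy class while leaving the normalizer computation essentially unchanged.

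The main obstacle is not the classification itself, which I invoke from \cite{Gor}, but the explicit normalizer bookkeeping in parts (4) and (5): identifying the chain $T < TZ < N_P(TZ)$ and confirming that the last term has order exactly $8$. This is routine once the presentations are fixed, but requires care with parities and with the exponent $2^{n-2}-1$ appearing in the semi-dihedral relation, which is what distinguishes the two non-central conjugacy classes in the dihedral case from the single one in the semi-dihedral case.
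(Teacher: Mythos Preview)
Your approach is essentially the same as the paper's: the paper's entire proof is the single line ``See Chapter 5, Section 4, in \cite{Gor}'', so both you and the author defer parts (1) and (2) to Gorenstein, and you simply supply the routine verifications for (3)--(5) that the paper leaves implicit in that citation.

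One small slip worth fixing: in the dihedral case you write that the non-central involutions are the $a^k b$ ``with $a^k \notin Z$''. In fact every reflection $a^k b$ (for all $k$) is a non-central involution when $n\geq 4$; the condition $a^k\notin Z$ is not needed and would wrongly exclude $b$ and $a^{2^{n-2}}b$. This does not affect your conjugacy-class count, since conjugation by $a$ still shifts $k\mapsto k+2$ and gives exactly two orbits according to parity. Likewise, your formula $N_P(TZ)=\langle a^{2^{n-3}},b\rangle$ is literally correct only for the representative $T=\langle b\rangle$; for general $T=\langle a^k b\rangle$ one gets $\langle a^{2^{n-3}},a^k b\rangle$, which is of course conjugate and still dihedral of order $8$.
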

\begin{proof}
See Chapter 5, Section 4, in \cite{Gor}
\end{proof}
Using this classification we are able to prove the non-existence of expansive subgroups with trivial $P$-core in a Roquette $p$-group.
\begin{thm}
Let $p$ be a prime number and let $P$ be a Roquette $p$-group. Then $P$ has no non-trivial expansive subgroup with trivial $P$-core.
\end{thm}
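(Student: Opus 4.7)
The plan is to split by cases according to the classification in Lemma \ref{lem91}. Let $T \leq P$ be non-trivial with trivial $P$-core. In the cyclic and generalized quaternion cases, Lemma \ref{lem91}(3) provides a unique subgroup $Z$ of order $p$ that lies in every non-trivial subgroup, so $Z \leq T$. Since $Z$ is characteristic in $P$, it is normal, hence contained in the $P$-core of $T$, contradicting the trivial-core hypothesis. Thus no such $T$ exists in these cases and the statement is vacuous.

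The real work is in the dihedral and semi-dihedral cases, where $|P| = 2^n$ with $n \geq 4$. By Lemma \ref{lem91}(4,5), every non-trivial subgroup containing $Z = Z(P)$ has $Z$ in its $P$-core, so the trivial-core hypothesis forces $T$ to be a non-central subgroup of order $2$, say $T = \langle t \rangle$. Then $S := N_P(T) = TZ$ is a Klein four-group, whose three involutions are $t$, $z$ (a generator of $Z$), and $tz$. The idea is to exhibit an element $g \in P \setminus S$ with $\pg{g}{T} \cap S = \{1\}$; given such $g$ one has $(\pg{g}{T} \cap N_P(T))T = T$, so the $N_P(T)$-core of $(\pg{g}{T} \cap N_P(T))T$ equals $T$ and does not properly contain $T$, showing $T$ is not expansive.

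Since $\pg{g}{T} = \{1, \pg{g}{t}\}$, the condition $\pg{g}{T} \cap S = \{1\}$ amounts to $\pg{g}{t} \notin \{t, z, tz\}$. The case $\pg{g}{t} = t$ means $g \in C_P(t) = S$, which is excluded by the requirement $g \notin S$; the case $\pg{g}{t} = z$ is impossible because $z$ is central while $t$ is not. So the only residual obstruction is $\pg{g}{t} = tz$, which I plan to rule out by counting: the set of $g \in P$ with $\pg{g}{t} = tz$ is either empty or a single coset of $C_P(t) = S$, of cardinality at most $|S| = 4$. Together with the four elements of $S$ itself, at most $8$ elements of $P$ are forbidden, while $|P| = 2^n \geq 16$, so a suitable $g$ exists. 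Producing this witness $g$ is the one non-trivial step in the argument; everything else follows directly from the classification in Lemma \ref{lem91}.
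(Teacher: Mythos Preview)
Your argument is correct and follows essentially the same route as the paper: reduce via Lemma~\ref{lem91} to the case where $P$ is dihedral or semi-dihedral of order $\geq 16$ and $T=\langle t\rangle$ is a non-central involution with $S=N_P(T)=TZ$ Klein four, then produce $g\notin S$ with $\pg{g}{T}\cap S=1$. The only cosmetic difference is in how $g$ is located: the paper picks any $g\notin N_P(S)$ (using $|N_P(S)|=8<|P|$) and observes that $\pg{g}{t}\in\{t,tz\}$ would force $\pg{g}{S}=S$, whereas you count the bad elements directly; since $\{g:\pg{g}{t}\in\{t,tz\}\}$ is precisely $N_P(S)$, the two arguments coincide.
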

\begin{proof}
Let $T$ be a non-trivial subgroup with trivial $P$-core. Then $T \cap Z(P)$ has to be trivial, otherwise $T \cap Z(P)$ would be contained in the $P$-core of $T$. It follows from Lemma \ref{lem91} that $T$ is trivial, except possibly if $p=2$, $P$ is dihedral or semi-dihedral, and $T$ is a non-central subgroup of order $2$. To prove that such $T$ is not expansive one wants to look at $\pg{g}{T} \cap N_P(T)$ for a suitable element $g$ where $g \not\in N_P(T)$. 

In both cases $S=N_P(T)$ is a Klein group. Moreover, since $N_P (S)$ is (dihedral) of order $8$ and $P$ has order at least $16$, we can choose $g \not\in  N_P(S)$. Using such $g$ one has $\pg{g}{T} \cap N_P(T) = \pg{g}{T} \cap TZ = 1$ which proves that $T$ is not expansive as the $N_P(T)$-core of the subgroup $(\pg{g}{T} \cap N_P(T))T $ is exactly $T$ and so does not contain $T$ properly.
\end{proof}
\section{Groups with cyclic Fitting subgroup} \label{secexpcn}
In this section one wants to investigate groups $G$ such that the Fitting subgroup $F(G)$ is cyclic of order $n$. One wants to know if expansive subgroups with trivial $G$-core exist in such groups. 
In this section, one assumes that $G$ is solvable. First note that it is a well known fact that $C_G(F(G)) \leq F(G)$ and therefore $G/F(G)$ injects into $\Out(F(G))$ thus one has the following exact sequence
\begin{figure}[h]
$$\xymatrix@1{1 \ar[r] & C_n \ar[r] &G \ar[r] & S \ar[r] &1} $$
\end{figure}\\
where $S$ is a subgroup of $\Out(C_n) = \Aut(C_n)$. The map $\iota : C_n \rightarrow G$ is the inclusion map. The map $\pi : G \rightarrow S$ sends an element $g$ to the conjugation map $c_g$. Note that this is not enough to ensure that $G$ is Roquette at this stage. One discusses this issue later on, see Theorem \ref{suitex3cond}. 

Suppose $n = p_1^{k_1} \dots p_m^{k_m}$ for some primes $p_i$ and integers $k_i$, so $C_n = \prod_{i=1}^m C_{p_i^{k_i}}$. It's a well-known result that $\Aut(C_n) \cong \prod_{i=1}^m \Aut(C_{p_i^{k_i}})$. Recall also that $\Aut(C_{2^k}) \cong C_2 \times C_{2^{k-2}}$ and for an odd prime $p_i$ one has $\Aut(C_{p_i^{k_i}}) \cong C_{p_i-1} \times C_{p_i^{k_i-1}}$. Let $g_{p_i}$ be a generator of $C_{p_i^{k_i}}$ in $C_n$ and define $\alpha_{p_i}$ an element of $\Aut(C_n)$ by $$\alpha_{p_i} : g_{p_i} \mapsto g_{p_i}^{1+p_i^{k_i-1}}$$ and $\alpha_{p_i}(g_{p_j}) = g_{p_j}$ if $j \neq i$. The map $\alpha_{p_i}$ is an element of order $p_i$ if $k_i > 1$, otherwise it is the identity map.
\begin{lem} \label{h1trivial}
Let $p$ be a prime number dividing $n$, then 
$$H^1\big(\gen{\alpha_p},C_n\big)  = \left\{
    \begin{array}{ll}
        1 & \text{ if $p$ is odd or $p=2$ and $k>2$,} \\
        C_2 & \text{ if $p=2$ and $k=2$.}
    \end{array}
\right.$$ 
\end{lem}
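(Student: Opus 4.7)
The plan is to reduce the computation to the cyclic cohomology of a single prime-power factor and then evaluate the standard norm formula directly. Writing $C_n = \prod_{j=1}^m C_{p_j^{k_j}}$ with $p = p_i$, each factor is $\langle\alpha_p\rangle$-stable, so $H^1$ splits as the product over $j$. For $j \neq i$ the action of $\alpha_p$ is trivial, and that factor of $H^1$ becomes $\Hom(\langle\alpha_p\rangle, C_{p_j^{k_j}})$; since $|\langle\alpha_p\rangle|$ is a power of $p$ while $|C_{p_j^{k_j}}|$ is a power of a different prime, this Hom group vanishes. So the computation reduces to $H^1(\langle\alpha_p\rangle, C_{p^k})$ with $k = k_i$. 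If $k = 1$, then $\alpha_p$ is the identity and there is nothing to prove, so one assumes $k \geq 2$.

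In that case $\langle\alpha_p\rangle \cong C_p$, and $M := C_{p^k} = \IZ/p^k\IZ$ is acted on by multiplication by $1+p^{k-1}$. The standard formula for the cohomology of a finite cyclic group is
$$H^1(\langle\alpha_p\rangle, M) = \Ker(N)/(\alpha_p - 1)M,$$
where $N = 1 + \alpha_p + \cdots + \alpha_p^{p-1}$ is the norm operator. Since $\alpha_p - 1$ acts as multiplication by $p^{k-1}$, one gets $(\alpha_p - 1)M = p^{k-1}M$, a subgroup of order $p$. To compute $N$, the binomial expansion gives $(1+p^{k-1})^j \equiv 1 + jp^{k-1} \pmod{p^k}$ (valid because $2(k-1) \geq k$), and hence
$$N \equiv p + p^{k-1}\binom{p}{2} \pmod{p^k}.$$

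The proof then concludes by case analysis. For $p$ odd, $p^{k-1}\binom{p}{2} = p^k(p-1)/2 \equiv 0$, so $N$ is multiplication by $p$; its kernel is $p^{k-1}M$, which coincides with $(\alpha_p-1)M$, giving $H^1 = 1$. For $p = 2$ and $k \geq 3$, one has $N \equiv 2 + 2^{k-1} = 2(1+2^{k-2}) \pmod{2^k}$ with $1+2^{k-2}$ odd, so $N$ is twice a unit of $\IZ/2^k\IZ$ and $\Ker(N) = 2^{k-1}M = (\alpha_2-1)M$, again giving $H^1 = 1$. For $p = 2$ and $k = 2$, however, $N \equiv 2 + 2 \equiv 0 \pmod{4}$, so $\Ker(N) = M = \IZ/4\IZ$ while $(\alpha_2 - 1)M = 2M$ has index $2$, yielding $H^1 \cong C_2$. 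The main point requiring genuine care is exactly this accident at $p = 2, k = 2$, where the norm vanishes mod $4$ rather than equalling twice a unit; it is the source of the exceptional $C_2$ and the reason this case is singled out in the statement.
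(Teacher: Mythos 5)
Your proposal is correct and follows essentially the same route as the paper: the same reduction to the $p$-part of $C_n$ by coprimality, the same cyclic-group norm formula $\Ker(N)/(\alpha_p-1)M$, and the same binomial computation of the norm, with the exceptional $C_2$ arising exactly as in the paper from the vanishing of $N$ modulo $4$ when $p=2$, $k=2$. The only differences are cosmetic (additive rather than multiplicative notation, and a slightly more explicit unit argument where the paper says ``easy to check by hand'').
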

\begin{proof}
Decompose $n$ as $n=p^k  \cdot n/p^k$ such that $p$ does not divide $n/p^k$ and let $g$ be a generator of $C_{p^k}$ in $C_n$. 
Note that 
$$H^1\big(\gen{\alpha_p},C_n\big) \cong H^1\big(\gen{\alpha_p},C_{p^k}\big) \times H^1\big(\gen{\alpha_p},C_{n/p^k}\big)$$ but  $H^1\big(\gen{\alpha_p},C_{n/p^k}\big)$ is trivial because the order of $\gen{\alpha_p}$ and the order of $C_{n/p^k}$ are coprime. Therefore $H^1\big(\gen{\alpha_p},C_n\big)$ is equal to $H^1\big(\gen{\alpha_p},C_{p^k}\big)$. Moreover, recall that in the cyclic case $H^1\big(\gen{\alpha_p},C_{p^k}\big) = \Ker(t) / \im(\alpha_p \cdot \upsilon )$, where $t = \prod_{i=0}^{p-1} \alpha_p^i$, $\upsilon \in \Aut(C_{p^k})$ sends $g$ to $g^{-1}$ and $(\alpha_p \cdot \upsilon)(g) := \alpha_p(g) \upsilon (g)$. Let's start to describe the action of $t$ on $C_{p^k}$. 
\begin{eqnarray*}
t(g^j) &=& \prod_{i=0}^{p-1} \alpha_p^i(g)^j = \prod_{i=0}^{p-1} g^{j(1+p^{k-1})^i} = \prod_{i=0}^{p-1}  g^{j+jip^{k-1} + \dots} \\
&=& \prod_{i=0}^{p-1}  g^{j+jip^{k-1}} = g^{pj+jp^{k-1} \frac{p(p-1)}{2} }.
\end{eqnarray*}
The equality between the first and the second line holds because the power of $p$ for the rest of the terms is bigger than $k$. If $p$ is odd the last term is equal to $g^{pj}$. Therefore $t$ sends $g$ to $g^p$, its kernel is $\gen{g^{p^{k-1}}}$. But the image of $\alpha_p \cdot \upsilon$ is also $\gen{g^{p^{k-1}}}$ as $(\alpha_p \cdot \upsilon)(g) =g^{1+p^{k-1}} g^{-1} = g^{p^{k-1}}$.
If $p=2$, the map $t$ sends $g^j$ to $g^{j(p+p^{k-1})}$. The kernel is again $\gen{g^{p^{k-1}}}$ if $k >2$ and $\gen{g^{p^{k-2}}}$ if $k=2$. 
As the kernel is a subgroup of the cyclic group $C_{2^k}$ it's easy to check it by hand. The image of $\alpha_p \cdot \upsilon$ is also $\gen{g^{p^{k-1}}}$ as $(\alpha_p \cdot \upsilon)(g) =g^{1+p^{k-1}} g^{-1} = g^{p^{k-1}}$. This leads us to the conclusion.
\end{proof}
\begin{lem} \label{h2trivial}
Let $p$ be a prime number dividing $n$, then 
$$H^2\big(\gen{\alpha_p},C_n\big)  = \left\{
    \begin{array}{ll}
        1 & \text{ if $p$ is odd or $p=2$ and $k>2$,} \\
        C_2 & \text{ if $p=2$ and $k=2$.}
    \end{array}
\right.$$ 
\end{lem}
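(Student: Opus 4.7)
The plan is to mimic the proof of Lemma \ref{h1trivial}, replacing the description $H^1 = \ker(t)/\im(\alpha_p \cdot \upsilon)$ by the corresponding description of $H^2$ for a cyclic group. Concretely, since $\langle \alpha_p\rangle$ is cyclic of prime order $p$, cohomology is $2$-periodic and $H^2(\langle\alpha_p\rangle, M) \cong \hat H^0(\langle\alpha_p\rangle, M) = M^{\langle\alpha_p\rangle}/\im(t)$, where $t = \prod_{i=0}^{p-1}\alpha_p^i$ is the norm. I would first reduce to the $p$-primary part exactly as in Lemma \ref{h1trivial}: write $n = p^k \cdot (n/p^k)$, use that $\alpha_p$ acts trivially on $C_{n/p^k}$ and has order coprime to $|C_{n/p^k}|$, and conclude that
$$H^2\bigl(\langle\alpha_p\rangle, C_n\bigr) \cong H^2\bigl(\langle\alpha_p\rangle, C_{p^k}\bigr).$$

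Next I would identify the fixed subgroup $C_{p^k}^{\langle\alpha_p\rangle}$. Writing $g$ for a generator of $C_{p^k}$, the equation $\alpha_p(g^j) = g^{j(1+p^{k-1})} = g^j$ forces $jp^{k-1}\equiv 0 \pmod{p^k}$, i.e.\ $p\mid j$. Hence $C_{p^k}^{\langle\alpha_p\rangle} = \langle g^p\rangle$, a group of order $p^{k-1}$.

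The core of the argument is then to compare this fixed subgroup with $\im(t)$, reusing the computation of $t$ already performed in the proof of Lemma \ref{h1trivial}. There it was shown that $t(g) = g^{pj + jp^{k-1}p(p-1)/2}$ applied to $g^j$; in particular $t(g) = g^p$ in the odd case, and $t(g) = g^{p + p^{k-1}}$ when $p=2$. The cases then unfold as follows: if $p$ is odd, $\im(t) = \langle g^p\rangle$ equals the fixed subgroup, so $H^2 = 1$. If $p=2$ and $k>2$, then $2 + 2^{k-1}$ has $2$-adic valuation exactly $1$, so $\im(t) = \langle g^2\rangle = C_{p^k}^{\langle\alpha_p\rangle}$ and again $H^2 = 1$. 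Finally, if $p=2$ and $k=2$, then $t(g) = g^{2+2} = 1$, so $\im(t)$ is trivial while the fixed subgroup $\langle g^2\rangle$ has order $2$, giving $H^2 \cong C_2$.

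The main obstacle is really only bookkeeping: one must be careful about the $2$-adic valuation of $p + p^{k-1}$ when $p=2$ (the split between $k>2$ and $k=2$ is exactly where this valuation jumps), and one must invoke the correct periodicity formula for the cohomology of a cyclic group. Since the action of $\alpha_p$ and the norm $t$ have already been computed in Lemma \ref{h1trivial}, no further calculation is needed beyond identifying $M^{\langle\alpha_p\rangle}/\im(t)$ in each case.
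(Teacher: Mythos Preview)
Your proposal is correct and follows essentially the same route as the paper: reduce to the $p$-primary part, use the cyclic-group formula $H^2=C_{C_{p^k}}(\langle\alpha_p\rangle)/\im(t)$, identify the fixed subgroup as $\langle g^p\rangle$, and reuse the computation of $t$ from Lemma~\ref{h1trivial} to handle the three cases. The only difference is cosmetic---you spell out the $2$-adic valuation of $2+2^{k-1}$ explicitly, whereas the paper simply asserts $\im(t)=\langle g^p\rangle$ in those cases.
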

\begin{proof}
Again, decompose $n$ as $n=p^k  \cdot n/p^k$ such that $p$ does not divide $n/p^k$ and let $g$ be a generator of $C_{p^k}$ in $C_n$. Using the same argument, one has $H^2\big(\gen{\alpha_p},C_n\big) = H^2\big(\gen{\alpha_p},C_{p^k}\big)$. In the cyclic case $H^2\big(\gen{\alpha_p},C_{p^k}\big) = C_{C_{p^k}} (\gen{ \alpha_p}) / \im(t)$, where $t = \prod_i \alpha_p^i$. First, note that $C_{C_{p^k}} (\gen{ \alpha_p}) = \gen{g^p}$. Indeed, it is easy to check that $C_{C_{p^k}} (\gen{ \alpha_p}) \geq \gen{g^p}$ but  $\gen{g^p}$ is a maximal subgroup of  $\gen{g}$ and $g$ is not stabilized by $\alpha_p$ therefore the other inclusion follows. Secondly, using the description of the action of $t$ in Lemma \ref{h1trivial}, one has also that $\im(t) = \gen{g^p}$ if $p$ is odd or $p=2$ and $k>2$ and therefore $H^2\big(\gen{\alpha_p},C_n\big)$ is trivial for these cases. Nevertheless if $p=2$ and $k=2$ then $\im(t) =1$ and so $H^2\big(\gen{\alpha_p},C_n\big) = C_2.$
\end{proof}
\begin{cor} \label{corexistenceD}
Let $p$ be an odd prime or $p = 2$ and $k > 2$. Suppose $\gen{\alpha_p}$ is a subgroup of $S$. Then there exists a subgroup $D$ of $G$ such that $\pi(D) = \gen{\alpha_p}$ and $D\cap C_n =1$.
\end{cor}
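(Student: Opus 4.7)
The plan is to obtain $D$ as a complement of $C_n$ in the preimage $\pi^{-1}(\gen{\alpha_p})$, using the vanishing of $H^2$ proved in Lemma \ref{h2trivial}.

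First I would set $H = \pi^{-1}(\gen{\alpha_p}) \leq G$. Restricting the given short exact sequence $1 \to C_n \to G \to S \to 1$ to $H$ yields an extension
$$1 \longrightarrow C_n \longrightarrow H \longrightarrow \gen{\alpha_p} \longrightarrow 1,$$
in which the induced action of $\gen{\alpha_p}$ on $C_n$ by conjugation in $H$ agrees with the action of $\alpha_p$ as an element of $\Aut(C_n)$, since by construction $\pi$ sends each $g \in G$ to $c_g \in S \subseteq \Aut(C_n)$.

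Next I would invoke the standard classification of group extensions: extensions of $\gen{\alpha_p}$ by the abelian group $C_n$ (with this prescribed action) are classified, up to equivalence, by the cohomology group $H^2\big(\gen{\alpha_p}, C_n\big)$, the trivial class corresponding exactly to those extensions that split, i.e. admit a complement. By the hypothesis that $p$ is odd or that $p=2$ and $k>2$, Lemma \ref{h2trivial} gives $H^2\big(\gen{\alpha_p}, C_n\big) = 1$, so the extension above is split.

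A splitting provides a subgroup $D \leq H \leq G$ such that $\pi|_D : D \to \gen{\alpha_p}$ is an isomorphism; in particular $\pi(D) = \gen{\alpha_p}$ and $D \cap \Ker(\pi|_H) = D \cap C_n = 1$, which is the desired subgroup. The only slightly delicate point is making sure that the action on $C_n$ used in Lemma \ref{h2trivial} matches the one coming from conjugation inside $H$; but this is automatic since $\pi$ is defined by sending $g$ to the conjugation $c_g$, so the two actions coincide by definition.
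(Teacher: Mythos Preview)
Your proof is correct and follows essentially the same approach as the paper: both argue that the extension $1 \to C_n \to \pi^{-1}(\gen{\alpha_p}) \to \gen{\alpha_p} \to 1$ must split because $H^2(\gen{\alpha_p},C_n)=1$ by Lemma~\ref{h2trivial}, and then take $D$ to be a complement. Your version is slightly more explicit in checking that the conjugation action agrees with the action used in Lemma~\ref{h2trivial}, but otherwise the arguments are the same.
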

\begin{proof}
By Lemma \ref{h2trivial}, there exists only one class of extensions of $\gen{\alpha_p}$ by $C_n$. Therefore the extension $\pi^{-1}(\gen{\alpha_p})$ is the semi-direct product of $C_n$ by a cyclic group of order $p$, which is the subgroup $D$ that we are looking for.
\end{proof}
\begin{lem} \label{lemngcd}
Let $G$ be an extension of $S$ by $C_n$ as above. Let $D$ be a subgroup of $G$ such that $D \cap C_n = 1$, then $N_{C_n}(D) = C_{C_n} (D) = C_{C_n}(\pi(D))$.
\end{lem}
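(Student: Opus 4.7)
The plan is to prove the two equalities $N_{C_n}(D) = C_{C_n}(D)$ and $C_{C_n}(D) = C_{C_n}(\pi(D))$ separately, both by direct element-chasing arguments.

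For the first equality, the inclusion $C_{C_n}(D) \subseteq N_{C_n}(D)$ is automatic. For the reverse inclusion, I would take $x \in N_{C_n}(D)$ and show that $x$ actually centralizes every $d \in D$ by considering the commutator $[x,d] = xdx^{-1}d^{-1}$. On the one hand, $xdx^{-1} \in D$ since $x$ normalizes $D$, and $d^{-1} \in D$, so $[x,d] \in D$. On the other hand, writing $[x,d] = x \cdot (dx^{-1}d^{-1})$ and using that $C_n$ is normal in $G$ (so $dx^{-1}d^{-1} \in C_n$) together with $x \in C_n$, one gets $[x,d] \in C_n$. Hence $[x,d] \in D \cap C_n = \{1\}$, so $xd = dx$, i.e.\ $x \in C_{C_n}(D)$. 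This is the heart of the argument and the only place where the hypothesis $D \cap C_n = 1$ is genuinely used.

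For the second equality, I would simply unwind the definitions. For $x \in C_n$ and $d \in D$, the condition $xdx^{-1} = d$ (i.e.\ $d$ commutes with $x$) is equivalent to $dxd^{-1} = x$, which by definition of $\pi$ says $\pi(d)(x) = x$, where $\pi(d) = c_d \in S \le \Aut(C_n)$. Quantifying over $d \in D$, an element $x \in C_n$ centralizes $D$ if and only if $x$ is fixed by every automorphism in $\pi(D)$, giving the desired equality $C_{C_n}(D) = C_{C_n}(\pi(D))$.

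I do not expect a serious obstacle here: the main point is just to organize the commutator computation cleanly and be explicit about the fact that, although $D$ itself need not be contained in $C_n$, its conjugation action on the normal abelian subgroup $C_n$ is encoded exactly by $\pi(D) \subseteq \Aut(C_n)$. The triviality of $D \cap C_n$ is what forces normalization to coincide with centralization, and is the only nontrivial input.
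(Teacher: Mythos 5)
Your proof is correct and follows essentially the same route as the paper: the key step in both is that for $x \in N_{C_n}(D)$ and $d \in D$ the commutator $x\,dx^{-1}d^{-1}$ lies in $C_n$ (by normality of $C_n$) and in $D$ (by normalization), hence is trivial since $D \cap C_n = 1$, forcing $x$ to centralize $D$; the second equality is in both cases just the observation that conjugation by $D$ on $C_n$ is by definition the action of $\pi(D)$.
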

\begin{proof}
For the first equality, let $x$ be an element of $N_{C_n}(D)$. Then, for all $d \in D$ one has $x d x^{-1} \in D.$ But $x d x^{-1} = x \pg{d}{x^{-1}} d$ which belongs to $D$ if, and only if,  $x \pg{d}{x^{-1}}  = 1$ which means that $x =\pg{d}{x}$ . This implies that $x$ is an element of $C_{C_n}(D)$. The other inclusion is trivial.

For the second equality, note that the action of $D$ on $C_n$ is the same as the action of $\pi(D)$ on $C_n$ by definition of the map $\pi$.
\end{proof}
\begin{lem} \label{cproj}
Let $H$ be a subgroup of $S$ and $H_i$ the $i^{\text{th}}$-projection of $H$ on $\Aut(C_n)$. Then 
$$C_{C_n} (H) =\prod_{i=1}^m C_{C_{p_i^{k_i}}} (H_i).$$
\end{lem}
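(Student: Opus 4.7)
The plan is straightforward, since this is essentially a bookkeeping statement about the componentwise nature of the action. I would decompose
$$C_n = \prod_{i=1}^m C_{p_i^{k_i}} \qquad \text{and} \qquad \Aut(C_n) \cong \prod_{i=1}^m \Aut(C_{p_i^{k_i}}),$$
the second isomorphism being the one recalled before Lemma \ref{h1trivial}. The key observation to record explicitly is that under this product decomposition, each factor $\Aut(C_{p_j^{k_j}})$ acts trivially on $C_{p_i^{k_i}}$ whenever $i \neq j$ (this is what the product decomposition of $\Aut(C_n)$ means). Consequently, for any $h \in H \leq S \leq \Aut(C_n)$ written as $h = (h_1, \ldots, h_m)$ with $h_i = \pi_i(h) \in H_i$, the action on an element $x = (x_1, \ldots, x_m) \in C_n$ is componentwise:
$$h \cdot x = (h_1 \cdot x_1, \ldots, h_m \cdot x_m).$$

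Given this, I would establish the two inclusions directly. For $\supseteq$, if $x_i \in C_{C_{p_i^{k_i}}}(H_i)$ for every $i$, then for any $h \in H$ we have $h_i \in H_i$ fixing $x_i$ for each $i$, so $h$ fixes $x$, hence $x \in C_{C_n}(H)$. For $\subseteq$, suppose $x = (x_1,\ldots,x_m) \in C_{C_n}(H)$. To show $x_i \in C_{C_{p_i^{k_i}}}(H_i)$, pick an arbitrary $h_i' \in H_i$; by definition of the projection there exists $h \in H$ with $\pi_i(h) = h_i'$. Since $h$ fixes $x$, comparing $i$-th components gives $h_i' \cdot x_i = x_i$, as required.

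The only step that needs any attention at all is the componentwise action statement, which should be made explicit as the first line of the proof; everything afterwards is formal. There is no real obstacle here — the lemma is essentially the observation that centralizing a subgroup in a product of modules with disjoint support factors over the product.
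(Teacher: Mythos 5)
Your proof is correct and follows essentially the same route as the paper: both rest on the componentwise nature of the action (each $C_{p_i^{k_i}}$ being characteristic, with $h$ restricting to $h_i$ on it) and on the fact that every element of $H_i$ lifts to an element of $H$, which the paper packages as a chain of set equalities and you package as two inclusions. No gap.
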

\begin{proof}
Recall that $C_n = \prod_{i=1}^m C_{p_i^{k_i}}$. Now this is just a calculation :
\begin{eqnarray*}
C_{C_n} (H)  &=& \{ c = (c_1,\dots, c_m) \in C_n  \tq \pg{h}{c} = c \text{ for all } h \in H \} \\
&=& \{ c = (c_1,\dots, c_m) \in C_n  \tq \pg{h}{c_i} = c_i \text{ for all } i \text{ and for all }h \in H \} \\
&=& \prod_{i=1}^m \{ c_i \in C_{p_i^{k_i}}  \tq \pg{h}{c_i} = c_i \text{ for all } i \text{ and for all }h \in H \} \\
&=& \prod_{i=1}^m \{ c_i \in C_{p_i^{k_i}}  \tq \pg{h_i}{c_i} = c_i \text{ for all } i \text{ and for all }h_i \in H_i \} \\
&=& \prod_{i=1}^m C_{C_{p_i^{k_i}}} (H_i).
\end{eqnarray*}
\end{proof}
\begin{lem} \label{p2}
Let $G$ be the group $C_{2^k} \rtimes C_2$ with $k >2$, where $C_2$ is generated by either $\beta_1 : g \mapsto g^{-1}$ or $\beta_2 : g \mapsto g^{-1+2^{k-1}}$ where $g$ is a generator of $C_{2^k}$. Let $b$ be an element of $C_2$. If the element $g \pg{b}{g^{-1}}$ belongs to $C_{C_{2^k}}(C_2)$ then the only possibility is that $b = 1$.
\end{lem}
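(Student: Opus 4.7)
The plan is a direct computation in both cases, splitting on which generator of $C_2$ we have and on whether $b$ is trivial.

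First I would identify the centralizer $C_{C_{2^k}}(C_2)$ in each case. Write a generic element of $C_{2^k}$ as $g^j$ and ask when it is fixed by the generator of $C_2$. For $\beta_1$ the condition $g^j = g^{-j}$ gives $2^k \mid 2j$, i.e.\ $2^{k-1} \mid j$; for $\beta_2$ the condition $g^j = g^{j(-1+2^{k-1})}$ rewrites as $2^k \mid j(2 - 2^{k-1})$, and since $k > 2$ the integer $2 - 2^{k-1} = -2(2^{k-2}-1)$ has $2$-adic valuation exactly $1$, so again $2^{k-1} \mid j$. Hence in both cases
$$C_{C_{2^k}}(C_2) = \langle g^{2^{k-1}} \rangle,$$
the unique subgroup of order $2$.

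Next I would handle $b$. If $b = 1$, then $g\pg{b}{g^{-1}} = g \cdot g^{-1} = 1$, which trivially lies in the centralizer (so there is nothing to forbid here). If $b$ is the nontrivial element of $C_2$, I compute $g\pg{b}{g^{-1}}$ in each case:
in the $\beta_1$ case, $\pg{b}{g^{-1}} = g$, so $g\pg{b}{g^{-1}} = g^2$;
in the $\beta_2$ case, $\pg{b}{g^{-1}} = g^{1-2^{k-1}}$, so $g\pg{b}{g^{-1}} = g^{2-2^{k-1}}$.

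Finally I would check that neither $g^2$ nor $g^{2-2^{k-1}}$ lies in $\langle g^{2^{k-1}}\rangle$ when $k > 2$: membership would require $2^{k-1}$ to divide $2$ (resp.\ $2 - 2^{k-1}$) modulo $2^k$, but $k-1 \geq 2$, so $2^{k-1} \geq 4$ does not divide $2$, and since $2^{k-1}$ is a multiple of $2^{k-1}$ already, the second condition reduces to the same obstruction $2^{k-1} \mid 2$. This contradiction forces $b=1$. No part of this is delicate; the only point to watch is the case analysis on the two generators and verifying that the $2$-adic valuation of $2 - 2^{k-1}$ is exactly $1$ when $k>2$, which is what makes the hypothesis $k>2$ essential.
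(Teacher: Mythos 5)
Your proof is correct and takes essentially the same route as the paper's: identify $C_{C_{2^k}}(C_2)$ as the unique subgroup of order $2$, compute $g\,\pg{b}{g^{-1}}$ for the nontrivial $b$ in each case (obtaining $g^2$ and $g^{2-2^{k-1}}=g^{2+2^{k-1}}$), and observe that neither lies in that subgroup when $k>2$. The only difference is that you verify the centralizer computation explicitly via the $2$-adic valuation, which the paper merely asserts.
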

\begin{proof}
Note that in both cases $C_{C_{2^k}}(C_2)=  \{ c \in C_{2^k} \tq c^2 = 1\} $. Suppose now that $g \pg{b}{g^{-1}}$ belongs to $C_{C_{2^k}}(C_2)$, where $b$ is an element of $C_2$. So actually, except being the identity, $b$ could only be $\beta_1$ or $\beta_2$ depending in which case we are. One shows that it must imply $b=1$ anyway. Indeed, remark that $g\beta_1(g^{-1}) = g^2$ is not an element of $C_{C_{2^k}}(C_2)$ because $k > 2$. Similary for $g\beta_2(g^{-1}) = g^{2+2^{k-1}}$. Therefore in both cases one must have $b=1$.
\end{proof}
\begin{thm} \label{suitex3cond}
Let $G$ be a group such that there is an exact sequence
\begin{figure}[H]
$$\xymatrix@1{1 \ar[r] & C_n \ar[r] &G \ar[r] & S \ar[r] &1} $$
\end{figure}
where $S$ is a subgroup of $\Aut(C_n)$, the map $\iota : C_n \rightarrow G$ is the inclusion map, the map $\pi : G \rightarrow S$ sends an element $g$ to the conjugation map $c_g$ and the power of the prime $2$ in the decomposition of $n$ is different from $2$. Then 
 \begin{enumerate}
\item if $G$ is a Roquette group then $S$ does not contain any subgroup $\gen{ \alpha_p}$ for a prime $p$ dividing $n$.
\item If $S_{p_j}$ does not contain a subgroup $\gen{ \alpha_{p_j}}$ for all prime $p_j$ dividing $n$ then $G$ has no non-trivial expansive subgroup with trivial $G$-core, where $S_{p_j}$ denotes the $j^{\text{th}}$-projection of $S$ on $\Aut(C_n)$.
\item If $G$ has no non-trivial expansive subgroup with trivial $G$-core then $G$ is Roquette.
\end{enumerate}
\end{thm}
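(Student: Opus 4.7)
The plan is to prove the three parts separately, using throughout that $\Aut(C_n) = \prod_i \Aut(C_{p_i^{k_i}})$ is abelian, so $S$ and every projection $H_j$ of any subgroup $H \leq S$ is abelian. For Part 1, I argue the contrapositive: assume some $\alpha_p \neq 1$ satisfies $\langle\alpha_p\rangle \leq S$. By Corollary \ref{corexistenceD}, pick $D \leq G$ of order $p$ with $\pi(D) = \langle\alpha_p\rangle$ and $D \cap C_n = 1$. Since $S$ is abelian, $\langle\alpha_p\rangle$ is normal in $S$, so $M := \pi^{-1}(\langle\alpha_p\rangle) = C_n D$ is normal in $G$. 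Its Sylow $p$-subgroup $P = C_{p^{k_p}}D$ is the unique Sylow (as $[M:P]$ is coprime to $p$), hence characteristic in $M$ and normal in $G$. A direct check shows that $P$ is modular maximal-cyclic, and its subgroup $\{z \in P : z^p = 1\} = \langle g_p^{p^{k_p-1}}\rangle \cdot D$ is elementary abelian of order $p^2$ (using $k_p \geq 2$, and for $p = 2$ the standing hypothesis $k_2 \geq 3$). This is a normal non-cyclic abelian subgroup of $G$, contradicting Roquetteness.

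For Part 2, take nontrivial $T \leq G$ with trivial $G$-core. Every subgroup of the cyclic $C_n$ is characteristic in $C_n$ and hence normal in $G$, so $T \cap C_n$ is a normal subgroup of $G$ contained in $T$, forcing $T \cap C_n = 1$; then $\pi$ embeds $T$ in $S$ and $H := \pi(T)$ is abelian. The plan is to find $g \in C_n \setminus C$, where $C := C_{C_n}(H)$, with $\{g \cdot h(g)^{-1} : h \in H\} \cap C = \{1\}$; the identity $\pg{g}{t} = g \cdot \pi(t)(g)^{-1} \cdot t$ together with Lemma \ref{lemngcd} then produces $g \notin N_G(T)$ with $\pg{g}{T} \cap N_G(T) \subseteq T$, so $T$ is not expansive. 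By Lemma \ref{cproj}, $C = \prod_j C_{C_{p_j^{k_j}}}(H_j)$. The hypothesis $\alpha_{p_j} \notin S_{p_j}$ implies that for odd $p_j$ with $H_j \neq 1$, $|H_j|$ is coprime to $p_j$ and $H_j$ acts fixed-point-freely on $C_{p_j^{k_j}}$, so the corresponding factor of $C$ is trivial; for $p_j = 2$ (hence $k_j \geq 3$), a nontrivial $H_j$ has order $2$ generated by $-1$ or $-1 + 2^{k_j-1}$, and the factor of $C$ is the unique subgroup of order $2$ in $C_{2^{k_j}}$. I then take $g$ supported on one coordinate: a generator of $C_{p_j^{k_j}}$ for some odd $p_j$ with $H_j \neq 1$ if such exists, otherwise a generator of $C_{2^{k_2}}$. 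A short computation (using that $1 - \lambda$ is a unit mod $p_j^{k_j}$ in the odd case, and that a generator $g_2$ of order at least $8$ makes $g_2^2$ or $g_2^{2 - 2^{k_2-1}}$ escape the order-$2$ subgroup) verifies the intersection condition.

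For Part 3, I argue by contraposition: given $G$ not Roquette, I construct the required expansive subgroup. Starting from a non-cyclic normal abelian $M \triangleleft G$, its non-cyclic Sylow $p$-subgroup $M_p$ (for some $p$) has exponent-$p$ subgroup elementary abelian of rank $r \geq 2$; since $S$ is abelian $G$ acts trivially on the quotient by its intersection with $C_n$, and the preimage of any line in this quotient yields $N \triangleleft G$ with $N \cong (C_p)^2$. Solvability of $G$ with Fitting subgroup $C_n$ gives $Z(G) \leq C_G(C_n) \leq C_n$, and the identity $cnc^{-1} = c \cdot \pi(n)(c)^{-1} \cdot n$ for $c \in C_n$, $n \in N$ shows any nontrivial normal subgroup of $G$ meets $C_n$ nontrivially, so $A := N \cap C_n$ has order $p$. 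If every line of $N$ were $G$-normal, $G$ would act on $N \cong \IF_p^2$ as scalars (the only linear maps fixing all $p+1$ lines), which combined with triviality on $N/A$ forces the action to be trivial and $N \leq Z(G) \leq C_n$, contradicting non-cyclicity. Hence some line $T \neq A$ is not normal; such $T$ has $T \cap C_n \leq T \cap A = 1$ and trivial $G$-core (a proper conjugate is a distinct line meeting $T$ trivially in $(C_p)^2$), and is expansive because for $g \notin N_G(T)$ the conjugate $\pg{g}{T}$ lies in $N \leq N_G(T)$ (as $N$ is abelian and contains $T$), so $\pg{g}{T} \cap N_G(T) = \pg{g}{T}$ and $(\pg{g}{T})T = N$, whose $N_G(T)$-core equals $N$ itself (since $N \triangleleft G$) and properly contains $T$. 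The main difficulty is the $p = 2$ case of Part 2: the hypothesis $\alpha_2 \notin S_2$ does not rule out $H_2 = \langle -1 \rangle$ or $\langle -1 + 2^{k_2-1}\rangle$, leaving a nontrivial centralizer, and the blanket $k_2 \geq 3$ is precisely what allows a choice of $g_2$ whose square (or modified square) escapes this centralizer.
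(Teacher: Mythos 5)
Your proof is correct, and it splits naturally into a part that mirrors the paper and a part that does not. For Parts 1 and 2 you follow essentially the paper's route. In Part 1 you arrive at the same normal elementary abelian subgroup $\gen{g_p^{p^{k-1}}}D$ of rank $2$, but you get its normality more cleanly, as the characteristic subgroup $\Omega_1$ of the Sylow $p$-subgroup of the normal subgroup $M=\pi^{-1}(\gen{\alpha_p})$, whereas the paper computes the conjugates $\pg{c}{D}$ explicitly using the $H^1$ lemma; the only quibble is that ``$[M:P]$ coprime to $p$'' by itself does not give uniqueness of the Sylow subgroup --- you should say that $C_{n/p^k}$ is centralized by $D$, so $M$ is nilpotent and $P$ is its normal Sylow $p$-subgroup. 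Part 2 is the paper's argument in all essentials: the same reduction via Lemma \ref{lemngcd} and Lemma \ref{cproj}, the same coordinate-wise choice of conjugating element, the fixed-point-free (coprime) action at odd primes where the paper instead quotes Gorenstein's $[C,H]\cap C_C(H)=1$, and the same $k\geq 3$ computation at $p=2$ that the paper isolates as Lemma \ref{p2}. Part 3 is where you genuinely diverge: the paper outsources the implication to Corollary 3.7 of \cite{MoT} (which ultimately rests on the stabilizing-biset machinery of \cite{B-T}) and only has to construct a faithful simple module by inducing a primitive character from $C_n$; you instead give a direct, self-contained group-theoretic proof of the contrapositive, locating a normal subgroup $N\cong C_p\times C_p$ (using $C_G(C_n)=C_n$ and the triviality of the $S$-action on $V/(V\cap C_n)$) and checking that any non-normal line $T$ of $N$ is a nontrivial expansive subgroup with trivial $G$-core, since every conjugate $\pg{g}{T}$ lies in $N\leq N_G(T)$ and $(\pg{g}{T})T=N$ has $N_G(T)$-core $N\supsetneq T$. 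Your version buys independence from the representation-theoretic input and from the external reference, at the modest cost of the (correct, though stated slightly out of logical order) verification that $V\cap C_n$ is nontrivial, hence of order exactly $p$.
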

\begin{proof}
One proves $1$ by proving the converse. Suppose there exists a prime $p$ such that $\gen{ \alpha_p}$ is a subgroup of $S$. Decompose $n$ as $n=p^{k}  \cdot n/p^{k} $ such that $p$ does not divide $n/p^{k} $ and let $g_p$ be a generator of $C_{p^{k} }$ in $C_n$.
By Corollary \ref{corexistenceD}, let $D$ be a subgroup of $G$ such that $\pi(D) = \gen{ \alpha_p}$ and $D \cap C_n = 1$. By Lemma \ref{lemngcd} and a quick calculation, one has $N_{C_n}(D) = C_{C_n} (\gen{ \alpha_p}) = \gen{(g_p)^p} \times C_{n/p^k}$. Indeed, it is easy to check that $C_{C_n} (\gen{ \alpha_p}) \geq \gen{(g_p)^p} \times C_{n/p^k}$ but  $\gen{(g_p)^p}$ is a maximal subgroup of  $\gen{(g_p)}$ and $g_p$ is not stabilized by $\alpha_p$ therefore the other inclusion follows.  Define the subgroup $E := \gen{(g_p)^{p^{k-1}} } \times D$, which is an elementary abelian $p$-group. One shows that $E$ is a normal subgroup of $G$. Indeed, let $h$ be an element of $G$, then $\pg{h}{E} = \pg{h}{\gen{(g_p)^{p^{k-1}} }} \times \pg{h}{D} = \gen{(g_p)^{p^{k-1}} } \times  \pg{h}{D}$ as $C_n$ is a normal subgroup of $G$. Moreover, by Lemma \ref{h1trivial}, one has $\pg{h}{D} = \pg{c}{D}$ for some $c \in C_{p^k}$, because $\pg{h}{D}$ is a complement of $C_n$ in $C_n \rtimes D$ and the elements of $D$ act trivially on $C_{n/p^k}$. Indeed one has $\pg{h}{D} \cap C_n = \pg{h}{(D \cap \pg{h^{-1}}{C_n})} = \pg{h}{(D \cap C_n)} =1$ and $\pg{h}{D} \subset \pi^{-1} \pi (\pg{h}{D}) = \pi^{-1}(\pg{\pi(h)} {\pi(D)}) =\pi^{-1} (\pi(D)) = C_n \rtimes D$. As $\alpha_p$ acts trivially on $C_{n/p^k}$ so does $D$ and thus one can restrict the conjugation to an element $c \in C_{p^k}$ instead of $C_n$. Now one looks at $\pg{c}{D}$. Let $x$ be an element of $\pg{c}{D}$, then $x = c \pg{d}{c^{-1}} d$ for a $d$ in $D$. Write $c$ as $g_p^i$ and $\pi(d)$ as $ \alpha_p^j$. Recall that the action of $D$ on $C_n$ is the same as $\gen{ \alpha_p}$. Therefore $c\pg{d}{c^{-1}} = c\pg{\alpha_p^j}{c^{-1}} = (g_p)^{-ijp^{k-1}}$ which implies that $\pg{c}{D}$ is included in $\gen{(g_p)^{p^{k-1}} } \times D$ and so $E$ is normal.  Therefore $G$ is not Roquette. This proves\nolinebreak[1] $1$.

For $2$, as before one has $n= p_1^{k_1} \dots p_m^{k_m}$ and moreover to simplify the notation suppose here that  $p_1^{k_1} = 2^k$. Suppose that $S_{p_j}$ does not contain a subgroup $\gen{ \alpha_{p_j}}$ for all prime $p_j$ dividing $n$ and prove $G$ has no non-trivial expansive subgroup with trivial $G$-core.
Let $A$ be a non-trivial subgroup of $G$ with trivial $G$-core. Then $A \cap C_n = 1$ otherwise $A \cap C_n$ would be included in the $G$-core of $A$. So $\pi$ induces an isomorphism between $A$ and a subgroup $H$ of $S$.

The subgroup $H$ is included in $\prod_{i=1}^m H_i$ where $H_i$ is the $i^{\text{th}}$-projection of $H$ on $\Aut(C_n)$. As $A$ is not trivial, so is $H$ and therefore there exists an integer $j$ such that $H_j$ is not trivial. Now one looks at the expansivity condition. Let $e$ be an element in $C_n = \prod_{i=1}^m C_{p_i^{k_i}}$ but not in $C_{C_n}(H)$ and write $e$ as $\prod_{i=1}^m e_{p_i}$ where $e_{p_i} \in C_{p_i^{k_i}}$. More precisely, if $p_j$ is odd, then take $e_{p_i}$ in $C_{C_n}(H)$ and only $e_{p_j}$ not in $C_{C_n}(H)$.  If $p_j = 2$ take the element $e_2$ to be the generator $g$ of $C_{2^k}$ and again $e_{p_i}$ in $C_{C_n}(H)$ if $p_i$ is not equal to $p_j$. Note that if $p_j = 2$, then $k > 2$ as $k=2$ is excluded by assumption and $k=1$ forces $H_2$ to be trivial. As $N_{C_n}(A) =C_{C_n}(H)$ by Lemma \ref{lemngcd}, the element $e$ is not in $N_G(A)$. Let $b$ be an element of $A$. Then $\pg{e}{b} = e \pg{b}{e^{-1}} b$ is an element of $N_G(A)$ if and only if $e \pg{b}{e^{-1}}$ is an element of $N_G(A) \cap C_n = C_{C_n} (H)$, by Lemma \ref{lemngcd}. Our purpose is to show that $e \pg{b}{e^{-1}} = 1$ and therefore $\pg{e}{A} \cap N_G(A) \leq A$. Write $b = \prod_i b_i$ such that $\pi(b_i)$ belongs to $H_i$. This is possible because $\pi$ induces an isomorphism between $A$ and $H$. Then, using Lemma \ref{cproj}, one has that $e \pg{b}{e^{-1}} = \prod_i e_{p_i} \pg{b_i}{e_{p_i}^{-1}} \in C_{C_n} (H) =\prod_i C_{C_{p_i}^{k_i}}(H_i)$, which means that for all $i$ one has $e_{p_i}\pg{b_i}{e_{p_i}^{-1}} \in  C_{C_{p_i}}(H_i)$. But by definition, for all $i$, one has $e_{p_i} \pg{b_i}{e_{p_i}^{-1}}  \in [C_{p_i^{k_i} },H_i]$ and therefore $e_{p_i} \pg{b_i}{e_{p_i}^{-1}}  \in [C_{p^{k_i} },H_i] \cap C_{C_{p_i}^{k_i}}(H_i) $. If $p_i$ is different from $p_j$ then  $\pg{b_i}{e_{p_i}}  = e_{p_i}$, as the element $e_{p_i}$ belongs to $C_{C_n}(H)$. By Theorem $2.3$ of \cite{Gor} on page 177, $[C_{p_j^{k_j} },H_j] \cap C_{C_{p_j}}(H_j)$ is trivial if $p_j$ is different from $2$. The reason is that $H_{j}$ has order prime to $p_j$ because $S_{p_j}$ does not contain a subgroup $\gen{ \alpha_{p_j}}$. For the case $p=2$ one uses Lemma \ref{p2} for $k>2$. Indeed, in this case one has shown that if $g \pg{b_2}{g^{-1}}$ belongs to $C_{C_{2^k}}(C_2)$ then $b_2 = 1$. 
 To sum up one has shown that $\pg{b_i}{e_{p_i}}  = e_{p_i}$ for all $i$. This implies that if $\pg{e}{b}$ belongs to $N_G(A)$ then $\pg{e}{b} = e \pg{b}{e^{-1}} b = b$. Finally one concludes that $\pg{e}{A} \cap N_G(A) \leq A$ and therefore $A$ is not expansive. As $A$ was an arbitrary non-trivial subgroup of $G$ with trivial $G$-core, this concludes the proof.

The third fact is a general result, see Corollary 3.7 of \cite{MoT}. One has just to prove the existence of a faithful simple module. Let $L$ be $\Ind_{C_n}^G (\xi)$ where $\xi$ is a primitive $n$th root of unity. This module is irreducible as the conjugate representations of $\xi$ by the action of $G/C_n$ are not isomorphic as it acts by automorphism on $C_n$. Moreover, the condition of primitivity on the root ensures the faithfulness of the induced module so $L$ is the wanted module.
\end{proof}
\begin{rem}{\text{}}
To ensure the non-existence of expansive subgroups with trivial $G$-core in such groups one needs, not only that $S$ does not contain a subgroup $\gen{ \alpha_p}$ for a prime $p$ dividing $n$ but also that $S_{p_j}$ does not contain a subgroup $\gen{ \alpha_{p_j}}$ for the primes $p_j$ dividing $n$. Which means that we do not want a diagonal subgroups of $\prod_{i=1}^m \Aut(C_{p_i^{k_i}})$ in $S$, see Lemma $1.1$ of \cite{The} for the terminology. However note that, depending on $G$, for example if there is no prime $p_i$ dividing $p_j -1$, the two conditions can be equivalent.
\end{rem}
\section{$\boldsymbol{(E \circ C_{p^i})\rtimes\SL(P/Z)} $}
Finally one looks at expansive subgroups in one particular example of a group with extraspecial groups in $O_p(G)$. Let $p$ be an odd prime and $E$ denotes an extraspecial group of order $p^3$ and exponent $p$. Let $P :=(E \circ C_{p^i} )$ be the central product of $E$ and a cyclic group $C_{p^i}$ over $Z(E)$ for $i \geq 1$ and $G$ be $P \rtimes \SL(P/Z)$. Then one has $Z(G) = Z(P) = C_{p^i}$ and $P/Z(P)$ is elementary abelian of rank $2$. In this section $Z$ refer to the center $Z(P)$ of $P$. First one has to understand the action of $\SL(P/Z)$ on $P$. One gives here a concrete definition of this action in term of generators. Let 
$$\begin{pmatrix}
1 &1 \\
0&1
\end{pmatrix} \quad \text{and} \quad \begin{pmatrix}
1 & 0 \\
1 &1
\end{pmatrix}$$ be two generators of $\SL(P/Z)$ acting on the vector space $P/Z(P)$ with basis $\{ \bar{f_1}, \bar{f_2} \}$. Let $f_1$ and $f_2$ be two representatives in $P$ and $z$ a generator of $Z$
The action is defined as follows on $P$

$$\begin{pmatrix}
1 &1 \\
0&1
\end{pmatrix} : \quad f_1 \mapsto f_1, \quad f_2 \mapsto f_1 f_2, \quad z \mapsto z$$

$$\begin{pmatrix}
1 &0 \\
1&1
\end{pmatrix} :\quad f_1 \mapsto f_1f_2,\quad f_2 \mapsto f_2,\quad z \mapsto z.$$
\begin{lem} \label{1.1}
Let $s$ be an element of $\SL(P/Z)$ and $e$ an element of $P$. If $s$ acts trivially on $\bar{e}$ in $P/Z$ then $\pg{s}{e} =e$.
\end{lem}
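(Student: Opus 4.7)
Let me denote by $s_1 = \begin{pmatrix} 1 & 1 \\ 0 & 1 \end{pmatrix}$ the first of the two matrix generators of $\SL(P/Z) = \SL_2(\IF_p)$ given above. My plan is to reduce the general claim, via a conjugation inside $\SL_2(\IF_p)$, to the explicit behaviour of $s_1$.

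First I would dispose of the trivial case. If $\bar{e} = 0$ then $e \in Z$; since both given generators fix $z$, the whole group $\SL(P/Z)$ acts trivially on $Z$, and thus $\pg{s}{e}=e$. So from now on assume $\bar{e} \neq 0$.

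Next, the hypothesis says that $s$ has a nonzero fixed vector in the two-dimensional $\IF_p$-vector space $P/Z$, so $1$ is an eigenvalue of $s$. Combined with $\det s = 1$, this forces the characteristic polynomial to be $(x-1)^2$, so $s$ is either the identity (where there is nothing to prove) or a transvection with a $1$-dimensional fixed line $L$ containing $\bar{e}$. Because $\SL_2(\IF_p)$ acts transitively on the set of lines of $P/Z$, I can pick $h \in \SL(P/Z)$ sending $\langle \bar{f_1}\rangle$ to $L$; then $h^{-1} s h$ is a transvection fixing $\langle\bar{f_1}\rangle$, so it must equal $s_1^\nu$ for some $\nu \in \IF_p^\ast$.

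The remaining work is the only explicit computation: $s_1^\nu$ fixes every element of the abelian subgroup $\langle f_1, Z\rangle$ of $P$. Indeed, by the given definition $s_1(f_1)=f_1$ and $s_1(z)=z$, so an easy induction on $\nu$ yields $\pg{s_1^\nu}{f_1^a z^c}=f_1^a z^c$ for all $a$ and $c$. Setting $e' := \pg{h^{-1}}{e}$, we have $\bar{e'} = h^{-1}\bar{e} \in \langle\bar{f_1}\rangle$, hence $e' \in \langle f_1, Z\rangle$ and $\pg{s_1^\nu}{e'} = e'$. Since $\SL(P/Z)$ acts on $P$ through a group homomorphism into $\Aut(P)$, one concludes
\[
\pg{s}{e} \;=\; \pg{(hs_1^\nu h^{-1})}{e} \;=\; \pg{h}{\pg{s_1^\nu}{\pg{h^{-1}}{e}}} \;=\; \pg{h}{e'} \;=\; e.
\]

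The only mildly delicate point is the structural observation that any element of $\SL_2(\IF_p)$ with a nonzero fixed vector must be a transvection (or the identity); once this is noted, transitivity of $\SL_2(\IF_p)$ on projective lines reduces the statement to the behaviour on the explicit generator $s_1$, where it follows directly from the definition of the action on generators.
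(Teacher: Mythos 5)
Your proof is correct and follows essentially the same route as the paper: conjugate $s$ by an element $h$ carrying $\langle\bar{f_1}\rangle$ to the fixed line of $s$, observe that the conjugate is upper unitriangular and hence fixes $f_1$ and $z$ by the explicit definition of the action, and transport back. You are in fact slightly more careful than the paper, since you justify why $\pg{h^{-1}}{s}$ is unipotent (eigenvalue $1$ together with determinant $1$) and treat the cases $\bar{e}=0$ and $\bar{e}\in\langle\bar{f_1}\rangle$ uniformly rather than dismissing them.
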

\begin{proof}
Suppose $\bar{e}$ is not in $\gen{\bar{f_1}}$ or $\gen{\bar{f_2}}$ otherwise the result is straightforward from the definition of the action of $\SL(P/Z)$ on $P$. By assumption there exists an element $h$ in $\SL(P/Z)$ such that $\pg{h}{s}$ is the matrix
$$ \begin{pmatrix}
1&\lambda \\
0&1
\end{pmatrix} \text{ for some } \lambda \in \IF_p.$$
The element $h$ corresponds to the change of basis from $(\bar{f_1}, \bar{f_2})$ to $(\bar{e}, \bar{f_2})$. By the definition of the action of $\SL(P/Z)$ on $P$ we have $\pg{\pg{h}{s}}{f_1} = f_1$ but the first term is $ hsh^{-1} f_1 h s^{-1} h^{-1}$ and therefore $\pg{s}{(h^{-1} f_1 h)} = h^{-1} f_1 h$. Using the fact that $\pg{h}{\bar{e}} = \bar{f_1}$ and so $\pg{h}{e} = zf_1$ for a $z$ in $Z$  we have 
$$\pg{s}{e} = \pg{s}{(zh^{-1} f_1 h)} = z \pg{s}{(h^{-1} f_1 h)} = z h^{-1} f_1 h = e.$$
\end{proof}

\begin{lem} \label{lemngp}
Let $S$ be a subgroup of $\SL(P/Z)$, then one has 
$$N_G(S) = N_P(S) \rtimes N_{\SL(P/Z)} (S).$$
\end{lem}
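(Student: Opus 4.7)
The plan is to prove both inclusions separately, using the uniqueness of the semidirect decomposition of any element $g \in G$ as $g = pm$ with $p \in P$, $m \in \SL(P/Z)$.

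The easy inclusion $N_P(S) \cdot N_{\SL(P/Z)}(S) \subseteq N_G(S)$ follows from the one-line calculation $(pm) S (pm)^{-1} = p (mSm^{-1}) p^{-1} = p S p^{-1} = S$, valid whenever $p \in N_P(S)$ and $m \in N_{\SL(P/Z)}(S)$.

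For the reverse inclusion, I would take $g = pm \in N_G(S)$ and let $s \in S$. Using the semidirect product multiplication rule, the conjugate $g s g^{-1}$ splits as
\begin{equation*}
g s g^{-1} = \bigl(p \cdot \pg{msm^{-1}}{p^{-1}}\bigr) \cdot (msm^{-1}),
\end{equation*}
where the first factor lies in $P$ and the second in $\SL(P/Z)$. Since this product must belong to $S \subseteq \SL(P/Z)$, the uniqueness of the decomposition forces the $P$-component to vanish and identifies $g s g^{-1}$ with $msm^{-1}$. The $\SL(P/Z)$-component then gives $msm^{-1} \in S$ for every $s \in S$, hence $m \in N_{\SL(P/Z)}(S)$. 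The vanishing $P$-component yields $\pg{msm^{-1}}{p} = p$ for every $s \in S$, i.e., $p$ is centralized by every element of $mSm^{-1} = S$, so $p \in C_P(S)$; and because $S$ sits inside the complement $\SL(P/Z)$, one checks that $C_P(S)$ coincides with $N_P(S) := N_G(S) \cap P$, since any element of $P$ conjugating $S$ into $S \subseteq \SL(P/Z)$ must actually fix every element of $S$.

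To close, I would verify the semidirect product structure: $N_P(S) = N_G(S) \cap P$ is normal in $N_G(S)$ because $P$ is normal in $G$, while $N_P(S) \cap N_{\SL(P/Z)}(S) \subseteq P \cap \SL(P/Z) = 1$. No step is genuinely hard; the only subtle point worth stating carefully is the implicit identification $N_P(S) = C_P(S)$, which holds precisely because $S$ is contained in the complement and no element of $P$ is available to realize a nontrivial normalizing action on it.
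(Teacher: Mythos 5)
Your proof is correct and follows essentially the same route as the paper: decompose $g=pm$, expand the conjugate $gsg^{-1}$ into its $P$- and $\SL(P/Z)$-components, and use uniqueness of the decomposition to force $m\in N_{\SL(P/Z)}(S)$ and the $P$-part into $N_P(S)$. The only cosmetic difference is that the paper concludes $p=gm^{-1}\in N_G(S)\cap P$ directly as a product of two normalizing elements, whereas you pass through the (equally valid, and in fact slightly more informative) identification $N_P(S)=C_P(S)$.
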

\begin{proof}
Let $x = ym$ be an element of $N_G(S)$, with $y \in P$ and $m \in \SL(P/Z)$. Let $s$ be an element of $S$. Then the fact that $\pg{x}{s} = y \pg{(\pg{m}{s})}{y^{-1}} \pg{m}{s}$ belongs to $S$ implies that $\pg{m}{s} \in S$ and therefore $m \in N_{\SL(P/Z)} (S) \leq N_G(S)$. So $xm^{-1} = y$ is an element of $N_G(S) \cap P = N_P(S)$, a the product of two elements of $N_G(S)$. The other inclusion is straightforward. 
\end{proof}
\begin{lem} \label{lemnps}
Let $S$ be a subgroup of $G$, then one has 
$$N_P(S) = N_E(S) Z.$$
\end{lem}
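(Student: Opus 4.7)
The plan is to prove the two inclusions of $N_P(S) = N_E(S)Z$ separately, relying on two facts already set up at the start of the section: first, $Z = Z(P) = Z(G) = C_{p^i}$, so every element of $Z$ is central in $G$; second, $P = E \circ C_{p^i}$ means $P = EZ$ as a set, so every element of $P$ can be written (not necessarily uniquely) as a product $ez$ with $e \in E$ and $z \in Z$.

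For the inclusion $N_E(S)Z \subseteq N_P(S)$, I would note that $N_E(S) \subseteq N_P(S)$ is trivial since $N_E(S) = N_G(S) \cap E \subseteq N_G(S) \cap P = N_P(S)$, and $Z \subseteq N_P(S)$ follows from $Z = Z(G)$ normalising every subgroup of $G$. Their product therefore lies in $N_P(S)$.

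For the reverse inclusion, let $x \in N_P(S)$ and write $x = ez$ with $e \in E$ and $z \in Z$ using the central product decomposition. Since $z$ lies in the center of $G$, it normalises $S$, so $e = xz^{-1}$ is a product of two elements of $N_G(S)$ and hence lies in $N_G(S)$. But $e \in E$, so $e \in N_G(S) \cap E = N_E(S)$, giving $x = ez \in N_E(S)Z$.

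There is really no main obstacle here; this is essentially a bookkeeping argument that exploits the centrality of $Z$ in $G$ and the defining decomposition $P = EZ$. The only point worth checking is that the decomposition $x = ez$ is available for every $x \in P$, which is immediate from the definition of central product.

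\begin{proof}
Since $Z = Z(G)$, every element of $Z$ normalises $S$, so $Z \subseteq N_G(S) \cap P = N_P(S)$. Moreover $N_E(S) = N_G(S) \cap E \subseteq N_P(S)$. Thus $N_E(S)Z \subseteq N_P(S)$.

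Conversely, let $x \in N_P(S)$. By the central product decomposition $P = E \circ C_{p^i} = EZ$, we may write $x = ez$ for some $e \in E$ and $z \in Z$. Since $z \in Z(G)$, we have $z \in N_G(S)$, and therefore $e = xz^{-1} \in N_G(S)$. As $e \in E$, this gives $e \in N_G(S) \cap E = N_E(S)$, whence $x = ez \in N_E(S)Z$.
\end{proof}
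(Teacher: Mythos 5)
Your proof is correct and follows essentially the same route as the paper: the paper also writes an arbitrary element of $P$ as $ez$ with $e\in E$, $z\in Z$ via the central product decomposition, and uses the centrality of $Z$ to reduce $\pg{ez}{S}=S$ to $\pg{e}{S}=S$, i.e.\ $e\in N_E(S)$. Your version merely spells this out as two inclusions instead of a chain of set equalities.
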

\begin{proof}
This is just a straightforward verification 
\begin{eqnarray*}
N_P(S) &=& \{  ez \in P \st e \in E, z \in Z \text { and } \pg{ez}{S} = S\} \\
&=& \{  ez \in P \st \pg{e}{S} = S\} \\
&=& \{ ez\in P  \st e \in N_E(S) \}  \\
&=&  N_E(S) Z.
\end{eqnarray*}
\end{proof}
\begin{lem} \label{lemat}
Let $H$ be a subgroup of $\SL(P/Z)$,  $e$ in $P$ and $G := P \rtimes\SL(P/Z)$. Then $H$ acts trivially on $e$ if and only if $e$ belongs to $N_G(H)$.
\end{lem}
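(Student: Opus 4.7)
The proof is essentially a direct calculation inside the semidirect product $G = P \rtimes \SL(P/Z)$. The key observation is that every element of $G$ writes uniquely as $pm$ with $p \in P$ and $m \in \SL(P/Z)$, and $P \cap \SL(P/Z) = 1$. For $e \in P$ and $h \in H \leq \SL(P/Z)$, I would first rewrite the conjugate
$$\pg{e}{h} = e h e^{-1} = e\,\pg{h}{e^{-1}}\cdot h,$$
where the first factor $e\,\pg{h}{e^{-1}}$ lies in $P$ (since $P$ is normal in $G$) and the second factor is $h \in \SL(P/Z)$. This is the only computation needed.

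For the forward direction, suppose $H$ acts trivially on $e$. Then $\pg{h}{e^{-1}} = e^{-1}$ for every $h \in H$, so the $P$-part $e\,\pg{h}{e^{-1}}$ is trivial and $\pg{e}{h} = h \in H$. Hence $e$ normalizes $H$.

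For the converse, suppose $e \in N_G(H)$. Then for each $h \in H$ the element $\pg{e}{h}$ lies in $H \subseteq \SL(P/Z)$. Using the decomposition above, $e\,\pg{h}{e^{-1}} = \pg{e}{h}\cdot h^{-1} \in \SL(P/Z)$ as well. But $e\,\pg{h}{e^{-1}} \in P$, and since $P \cap \SL(P/Z) = 1$, this forces $e\,\pg{h}{e^{-1}} = 1$, that is, $\pg{h}{e} = e$. As $h$ was arbitrary in $H$, the subgroup $H$ fixes $e$.

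There is no real obstacle here: the lemma is a bookkeeping statement about semidirect products, and the only thing to be careful about is correctly isolating the $P$-component of the commutator $\pg{e}{h}$ and invoking the uniqueness of the $P \rtimes \SL(P/Z)$ decomposition to conclude that this component vanishes.
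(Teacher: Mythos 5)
Your proof is correct and follows essentially the same route as the paper: decompose $ehe^{-1} = e\,\pg{h}{e^{-1}}\cdot h$ into its $P$-part and $\SL(P/Z)$-part and invoke uniqueness of that decomposition to force $e\,\pg{h}{e^{-1}}=1$. Your write-up is if anything slightly more explicit about why membership in $H$ forces the $P$-component to vanish, but there is no substantive difference.
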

\begin{proof}
Let $h$ be an element of $H$. We have $e h e^{-1} = e \pg{h}{e^{-1} } h$ so by the uniqueness of the decomposition in elements of $P$ and $\SL(P/Z)$ the element $e h e^{-1}$ belongs to $H$ if and only if  $ e \pg{h}{e^{-1} } =1$ which means that $h$ acts trivially on $e$. This must be satisfied for all $h$ in $H$ and the result follows.
\end{proof}
\begin{lem} \label{lemngs}
Let $G := P \rtimes\SL(P/Z)$, $x:=em$ with $e$ in $P$ and $m$ in $\SL(P/Z)$ and $S$ be a subgroup of the following form:
$$ \big\{ \varphi(h) h \st h \in H \big \} \text{ where } \varphi : H \rightarrow P$$
$\text{with } \varphi(hk) = \varphi(h) \pg{h}{\varphi(k)} \text{ for all } h,k \in H \text{ and } H\leq \SL(P/Z) .$ Then $x$ belongs to $N_G(S)$ if and only if
$m \in N_{\SL(P/Z)} (H)$ and $e\pg{m}{\varphi(h)} \pg{\pg{m}{h}}{e^{-1}} = \varphi(\pg{m}{h})$ for all $h \in H$.
\end{lem}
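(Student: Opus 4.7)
The plan is to argue by direct computation, using the uniqueness of the decomposition in the semidirect product $G = P \rtimes \SL(P/Z)$. Since every element of $G$ can be written uniquely as a product $p \cdot m'$ with $p \in P$ and $m' \in \SL(P/Z)$, it suffices to put $xsx^{-1}$ into this canonical form for an arbitrary $s = \varphi(h)h \in S$ and then read off the conditions for $xsx^{-1}$ to lie in $S$.

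First I would perform the key manipulation. For $s = \varphi(h)h$ with $h \in H$, I compute
$$xsx^{-1} \;=\; e\,m\,\varphi(h)\,h\,m^{-1}\,e^{-1} \;=\; e\,\pg{m}{\varphi(h)}\,(m h m^{-1})\,e^{-1} \;=\; \bigl(e\,\pg{m}{\varphi(h)}\,\pg{\pg{m}{h}}{e^{-1}}\bigr)\cdot \pg{m}{h},$$
where the first rewriting commutes $m$ past $\varphi(h) \in P$ using the semidirect structure, and the second inserts $\pg{m}{h}\pg{m}{h}^{-1}$ to the right of $e^{-1}$ to pull the $\SL(P/Z)$-part to the end. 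The $P$-part is $e\,\pg{m}{\varphi(h)}\,\pg{\pg{m}{h}}{e^{-1}}$ and the $\SL(P/Z)$-part is $\pg{m}{h}$.

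Next, since $S = \{\varphi(k)k \mid k \in H\}$, membership $xsx^{-1}\in S$ is equivalent, by uniqueness of the semidirect decomposition, to the two conditions $\pg{m}{h}\in H$ and $e\,\pg{m}{\varphi(h)}\,\pg{\pg{m}{h}}{e^{-1}} = \varphi(\pg{m}{h})$. Requiring this for every $h\in H$ gives $\pg{m}{H} \subseteq H$, and since $H$ is finite this is the same as $m \in N_{\SL(P/Z)}(H)$; the second condition is precisely the one stated in the lemma. For the converse, if both conditions hold then for each $h \in H$ the calculation shows $xsx^{-1} = \varphi(\pg{m}{h})\,\pg{m}{h} \in S$, so $xSx^{-1}\subseteq S$, and finiteness upgrades this to equality, giving $x \in N_G(S)$.

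There is no real obstacle here; the whole argument is a bookkeeping computation in the semidirect product. The only point to be careful with is not confusing the two different roles of $\pg{m}{\cdot}$ (acting on $P$ on the one hand, and denoting conjugation inside $\SL(P/Z)$ on the other) and remembering to appeal to the uniqueness of the $P \cdot \SL(P/Z)$-decomposition when extracting the two separate conditions from a single equation.
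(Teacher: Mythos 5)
Your proposal is correct and follows essentially the same route as the paper: the identical computation putting $xsx^{-1}$ into the form $\bigl(e\,\pg{m}{\varphi(h)}\,\pg{\pg{m}{h}}{e^{-1}}\bigr)\pg{m}{h}$, followed by an appeal to the uniqueness of the $P\cdot\SL(P/Z)$-decomposition to extract the two conditions. Your explicit remark that finiteness upgrades $\pg{m}{H}\subseteq H$ and $xSx^{-1}\subseteq S$ to equalities is a small extra care the paper leaves implicit.
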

\begin{proof}
It's a straightforward calculation.
Let $s$ be an element of $S$, $s = \varphi(h) h$ for some $h$ in $H$. We have
$$\pg{em}{s} = \pg{em}{\big(\varphi(h) h\big)} =  \pg{em}{\varphi(h)} \pg{em}{h} = \pg{em}{\varphi(h)}  e \pg{\pg{m}{h}}{e^{-1}}\pg{m}{h} = e\pg{m}{\varphi(h)} \pg{\pg{m}{h}}{e^{-1}} \pg{m}{h}.$$
If $em \in N_G(S)$ then $\pg{em}{s} = \varphi(h_2) h_2$ for some $h_2$ in $H$. The unique decomposition in elements of $P$ and $\SL(P/Z)$ implies that 
$e\pg{m}{\varphi(h)} \pg{\pg{m}{h}}{e^{-1}} =  \varphi(h_2)$ and $\pg{m}{h} = h_2$. This holds for all $h \in H$.

Conversely, if $m \in N_{\SL(P/Z)} (H)$ and $e\pg{m}{\varphi(h)} \pg{\pg{m}{h}}{e^{-1}} = \varphi(\pg{m}{h})$ for all $h \in H$ then $\pg{em}{s}  = \varphi(\pg{m}{h}) \pg{m}{h} \in S$ and this holds for all $s \in S$. 
\end{proof}
\begin{lem} \label{cebceh}
Let $G := P \rtimes\SL(P/Z)$ and $S$ be a subgroup of the following form:
$$ \big\{ \varphi(h) h \st h \in H \big \} \text{ where } \varphi : H \rightarrow P$$
$\text{with } \varphi(hk) = \varphi(h) \pg{h}{\varphi(k)} \text{ for all } h,k \in H \text{ and } H\leq \SL(P/Z) .$ Then 
$$N_P(S) = C_P(S) \leq C_P(H) = N_P(H).$$
\end{lem}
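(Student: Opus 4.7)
I would split the statement into four assertions and note that two of them are immediate. The equality $C_P(H)=N_P(H)$ is a direct consequence of Lemma \ref{lemat} (specialised to conjugation by elements of $P$): an element $e\in P$ normalizes a subgroup $H\leq\SL(P/Z)$ if and only if $H$ acts trivially on $e$, which is precisely the condition $e\in C_P(H)$. The inclusion $C_P(S)\subseteq N_P(S)$ is automatic. The real content is therefore to establish $N_P(S)\subseteq C_P(S)$ together with $C_P(S)\subseteq C_P(H)$, and I plan to obtain both at once by proving the single inclusion $N_P(S)\subseteq C_P(S)\cap C_P(H)$.

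Given $e\in N_P(S)$, Lemma \ref{lemngs} applied with $m=1$ collapses the normalizing condition to $e\,\varphi(h)\,\pg{h}{e^{-1}}=\varphi(h)$ for every $h\in H$, which rearranges to
$$e = \varphi(h)\,\pg{h}{e}\,\varphi(h)^{-1}.$$
The key observation is that $P/Z$ is elementary abelian, so $[P,P]\leq Z$, and hence conjugation by $\varphi(h)$ is trivial modulo $Z$. Reducing the displayed equation therefore yields $\bar e = \pg{h}{\bar e}$ in $P/Z$, so $h$ acts trivially on the class of $e$. Lemma \ref{1.1} lifts this back to $\pg{h}{e}=e$, which gives $e\in C_P(H)$; inserting $\pg{h}{e}=e$ into the displayed equation collapses it further to $e\,\varphi(h)=\varphi(h)\,e$, so $e$ also centralizes each generator $\varphi(h)h$ of $S$, placing $e$ in $C_P(S)$ as well.

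The only delicate point is spotting early that the structural input $[P,P]\leq Z$ is what bridges the rigid equation in $P$ to the tractable condition on the elementary abelian quotient $P/Z$; once that reduction is made, Lemma \ref{1.1} completes the argument and the full chain $N_P(S)=C_P(S)\leq C_P(H)=N_P(H)$ drops out.
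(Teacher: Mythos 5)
Your proposal is correct and follows essentially the same route as the paper: both derive the relation $e\,\varphi(h)\,\pg{h}{e^{-1}}=\varphi(h)$ from the normalizing condition, pass to the abelian quotient $P/Z$ to see that $h$ fixes $\bar e$, and then invoke Lemma \ref{1.1} to lift this to $\pg{h}{e}=e$, from which $e\in C_P(H)$ and $e\in C_P(S)$ both follow. Your only (harmless) variations are citing Lemma \ref{lemngs} with $m=1$ instead of redoing that computation, and getting $C_P(H)=N_P(H)$ from Lemma \ref{lemat} rather than from the ``same argument with $\varphi=1$''.
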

\begin{proof}
First one proves that $N_P(S) = C_P(S)$. Let $e$ be an element of $N_P(S)$. As above one has, for all $h \in H$,
$$\pg{e}{s} = \pg{e}{\big(\varphi(h) h\big)} =  \pg{e}{\varphi(h)} \pg{e}{h} = \pg{e}{\varphi(h)}  e \pg{h}{e^{-1}}{h} = e\varphi(h) \pg{h}{e^{-1}} h.$$
The unique decomposition in elements of $P$ and $\SL(P/Z)$ implies that 
$$e\varphi(h) \pg{h}{e^{-1}} =  \varphi(h).$$ 
This equality implies that $h(\bar{e}) = \bar{e}$ in $P/Z$, because $P/Z$ is abelian, and so by Lemma \ref{1.1} $e=\pg{h}{e}$, for all $h \in H$, and thus $e$ belongs to $C_P(H)$. Using this in the equality above one has $\pg{e}{\varphi(h)} = \varphi (h)$. This shows that $N_P(S) \leq C_P(S)$. The other inclusion is trivial and thus $N_P(S) = C_P(S)$. The same argument with $\varphi = 1$ shows that $C_P(H) = N_P(H)$.

Finally, one proves that $C_P(S)$ is a subgroup of $C_P(H)$. Indeed let $e$ be an element of $C_P(S) = N_P(S)$. By the argument above $e$ belongs to $C_P(H)$ and the result follows.
\end{proof}
\begin{lem} \label{lemngtsp}
Let $G := P \rtimes \SL(P/Z)$ and $T$ be the non-central cyclic $p$-subgroup of $E$ generates by $f_1$. Then we have 
$$N_G(T) = N_P(T) \rtimes N_{\SL(P/Z)} (T) = \big(T \times Z \big) \rtimes \big\{ \begin{pmatrix}
\lambda&\alpha \\
0&\lambda^{-1}
\end{pmatrix}
\st \lambda \in \IF_p^{*} \text{ and }  \alpha \in \IF_p \big\}.$$
Moreover the $p$-Sylow subgroup of $N_G(T)$ is normal in $N_G(T)$.
\end{lem}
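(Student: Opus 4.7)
The argument proceeds in three stages: establishing the semidirect decomposition, computing each factor explicitly, and verifying that the Sylow $p$-subgroup is normal.

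First, I would adapt the argument of Lemma \ref{lemngp} to the case $T \leq P$. Write $x = ym \in N_G(T)$ with $y \in P$ and $m \in \SL(P/Z)$. The relation $\pg{x}{T} = y (\pg{m}{T}) y^{-1} = T$, reduced modulo $Z$ and using that $P/Z$ is abelian (so conjugation by $y \in P$ is trivial on $P/Z$), forces $m(\bar T) = \bar T$ where $\bar T = \gen{\bar{f_1}}$. Since $\SL(P/Z)$ acts on $P/Z$ by the matrix action in the basis $\{\bar{f_1}, \bar{f_2}\}$, this is equivalent to $m$ being upper triangular in $\SL_2(\IF_p)$. The explicit form of the lifted action on $P$ (given on the two generators and extended multiplicatively) then yields $\pg{m}{T} = T$, so $m \in N_{\SL(P/Z)}(T)$ and $y \in N_P(T)$; together with $N_P(T) \cap N_{\SL(P/Z)}(T) = 1$ coming from the semidirect structure of $G$, this establishes the decomposition.

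Next, I compute each factor. The preceding argument identifies $N_{\SL(P/Z)}(T)$ with the stabilizer of $\bar T$ in $\SL_2(\IF_p)$, namely the Borel subgroup $\{ \begin{pmatrix} \lambda & \alpha \\ 0 & \lambda^{-1} \end{pmatrix} \st \lambda \in \IF_p^*,\, \alpha \in \IF_p \}$ of order $p(p-1)$. For $N_P(T)$, Lemma \ref{lemnps} gives $N_P(T) = N_E(T) Z$. Since $N_E(T)$ is a $p$-group and $N_E(T)/C_E(T)$ embeds into $\Aut(T) \cong C_{p-1}$, one has $N_E(T) = C_E(T)$. A direct commutator computation in the extraspecial group $E$ (using that $[f_1, f_2]$ generates $Z(E)$) gives $C_E(T) = T \cdot Z(E)$. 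Consequently $N_P(T) = T \cdot Z(E) \cdot Z = T Z$, because $Z(E) \leq Z$ by the central-product structure; the product is direct since $T \cap Z = T \cap Z(E) = 1$ (as $T$ is non-central).

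For the Sylow statement, compute $|N_G(T)| = p^{i+1} \cdot p(p-1) = p^{i+2}(p-1)$, so the Sylow $p$-subgroup has order $p^{i+2}$ and index $p-1$ coprime to $p$. The natural candidate $\Sigma := (T \times Z) \rtimes \{ \begin{pmatrix} 1 & \alpha \\ 0 & 1 \end{pmatrix} \st \alpha \in \IF_p \}$ has the correct order, and $N_G(T) = \Sigma \cdot H$ for $H = \{ \begin{pmatrix} \lambda & 0 \\ 0 & \lambda^{-1} \end{pmatrix} \}$. I would verify that $H$ normalizes $\Sigma$ by checking that it preserves $T \times Z$ (stabilizing both factors) and normalizes the upper unipotent (conjugating $\alpha$ to $\lambda^2 \alpha$); hence $\Sigma$ is normal, and being a normal $p$-subgroup of index coprime to $p$, it is the unique Sylow $p$-subgroup. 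The delicate point throughout is the first one: explicitly verifying that Borel matrices normalize $T$ inside $P$ (not merely $\bar T$ in $P/Z$), which rests on the precise lift of the $\SL(P/Z)$-action to $P$.
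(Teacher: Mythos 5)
Your proposal is correct and follows essentially the same route as the paper: the paper likewise reduces to $N_G(Z\times T)$ (equivalently, works modulo $Z$ using that $P/Z$ is abelian) to force the $\SL(P/Z)$-component into the upper-triangular Borel and then splits off $e=xm^{-1}\in N_P(T)$. Your write-up is in fact more complete than the paper's, which leaves the identifications $N_P(T)=T\times Z$ and the normality of the Sylow $p$-subgroup unproved, and you rightly flag the one point both arguments lean on, namely that the chosen lift of the Borel (in particular of the diagonal torus) normalizes $T$ itself and not merely $\bar T$.
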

\begin{proof}
Obviously $N_P(T) \rtimes N_{\SL(P/Z)} (T) \leq N_G(T)$ and if $x := em \in N_G(T)$ with $e \in P$ and $m \in \SL(P/Z)$ then $em \in N_G( Z \times T)$ so $m \in e^{-1} N_G( Z\times T) = N_G( Z \times T)$ which means that $m$ belongs to  $\{ \begin{pmatrix}
\lambda&\alpha \\
0&\lambda^{-1}
\end{pmatrix}
\st \lambda \in \IF_p^{*} \text{ and }  \alpha \in \IF_p \}$ which is exactly $N_{\SL(P/Z)} (T) $. Therefore we can conclude with the fact that $e = x m^{-1} \in N_G(T) \cap P = N_P(T)$.
\end{proof}
\begin{lem} \label{lemh1}
Let $p$ be an odd prime number, then 
$$H^1\Big(\SL\big(E/Z(E)\big),E/Z(E)\Big) = 1.$$
\end{lem}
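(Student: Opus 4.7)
The plan is to exploit the central element $-I \in \SL(E/Z(E)) \cong \SL_2(\mathbb{F}_p)$, which acts as $-1$ on the 2-dimensional $\mathbb{F}_p$-vector space $V := E/Z(E)$, together with the fact that $2$ is invertible modulo $p$ (since $p$ is odd). This is the standard ``central-element divided by 2'' trick for killing $H^1$ of an $\SL_2$-module in odd characteristic.

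Concretely, let $\phi : \SL(V) \to V$ be a $1$-cocycle, so $\phi(gh) = \phi(g) + g\cdot \phi(h)$ for every pair $g,h \in \SL(V)$. Write $c := -I$. Since $c$ lies in the center, I would compute $\phi(gc)$ in two ways:
\begin{equation*}
\phi(g) + g \cdot \phi(c) \;=\; \phi(gc) \;=\; \phi(cg) \;=\; \phi(c) + c \cdot \phi(g) \;=\; \phi(c) - \phi(g),
\end{equation*}
the last equality because $c$ acts as $-1$ on $V$. Rearranging gives
\begin{equation*}
2 \phi(g) \;=\; (1-g)\phi(c) \qquad \text{for every } g \in \SL(V).
\end{equation*}

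Since $p$ is odd, $2$ is invertible in $\mathbb{F}_p$ and hence invertible on $V$. Setting $v := -\phi(c)/2 \in V$, I would conclude $\phi(g) = g \cdot v - v$ for all $g$, so $\phi$ is a coboundary. As $\phi$ was arbitrary, $H^1(\SL(V),V) = 1$.

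There is essentially no obstacle here; the only step to be slightly careful about is pointing out why the argument fails for $p=2$ (the cocycle identity becomes degenerate because $-I = I$, which is consistent with the fact that $H^1(\SL_2(\mathbb{F}_2),\mathbb{F}_2^2)$ is nonzero), and reminding the reader that we use precisely the hypothesis that $p$ is odd at the division-by-$2$ step.
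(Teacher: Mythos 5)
Your argument is correct, and it is genuinely different from the one in the paper. You use the central element $-I$ of $\SL(E/Z(E))\cong\SL_2(\IF_p)$, which acts as $-1$ on $V=E/Z(E)$: comparing $\phi(gc)$ and $\phi(cg)$ for a cocycle $\phi$ yields $2\phi(g)=(1-g)\phi(c)$, and invertibility of $2$ for $p$ odd exhibits $\phi$ as the coboundary of $v=-\phi(c)/2$. This is the standard ``center kills cohomology'' argument, and it is more elementary than the paper's proof, which instead invokes the modular representation theory of $\SL_2(\IF_p)$ in defining characteristic: the $p+1$ simple modules $V_1,\dots,V_p$, the uniseriality of the projective cover $P_1$ of the trivial module with composition factors $V_1,V_{p-2},V_1$, and the observation that a non-split extension of $\IF_p$ by $V_2$ would force $V_2$ to occur in $P_1$, computing $\Ext^1_{\IF_p\SL_2(\IF_p)}(\IF_p,V_2)=0$. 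Your route buys more: the same central-element trick shows $H^n(\SL(V),V)=0$ for all $n\ge 1$ (since $-I$ is central and $-2$ is invertible on $V$), and it makes completely transparent where oddness of $p$ enters and why the statement fails for $p=2$. The paper's route is tied to this specific small case but displays structural information about the principal block that the author may want for other purposes. The only small point worth making explicit in your write-up is the translation between your additive coboundary $\phi(g)=g\cdot v-v$ and the multiplicative convention $f(g)=a^{-1}\,\pg{g}{a}$ of the paper's Remark on crossed homomorphisms; they agree since $V$ is abelian.
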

\begin{proof}
Recall that the group $\SL(E/Z(E))$ has $p+1$ simple $\IF_p\SL(E/Z(E))$-modules, denoted by $V_1, \dots, V_p$, where $V_i$ has dimension $i$, see \cite{Alp} page $15$ for more details.
The action of $\SL(E/Z(E))$ on $E/Z(E)$ has no trivial submodule and so $V_2 = E/Z(E)$. Let $P_1$ be the indecomposable  projective cover of $V_1$ and $\pi : P_1 \rightarrow V_1$ the corresponding homomorphism. As $p$ is odd, one can show that $P_1$ is uniserial, with three composition factors which occur as $V_1, V_{p-2}$ and $V_1$, see \cite{Alp} page $48$ for more details. 
Recall that $H^1\big(\SL(E/Z(E)),E/Z(E)\big) = \Ext_{\IF_p\SL(E/Z(E))}^1(\IF_p,E/Z(E)).$
The latter is trivial if and only if any exact sequence
\begin{figure}[h]
$$\xymatrix@1{0 \ar[r] & E/Z(E) \ar[r] &W \ar[r] & \IF_p \ar[r] &0} $$
\end{figure}\\
splits where $W$ is an $\IF_p\SL(E/Z(E))$-module. Suppose that such an exact sequence does not split for some $W$ and call $g$ the homomorphism from $W$ to $V_1 = \IF_p$. As $0\subset V_2\subset W$ with $W/V_2$ simple we conclude by Jordan-H\"older Theorem on composition series that $V_2$ is the unique non-zero proper submodule of $W$. Indeed, the only other possibility for the composition series would be $0\subset V_1\subset W$, but then the previous exact sequence would be split, which is excluded by assumption. The module $P_1$ being projective there exists a homomorphism $f : P_1 \rightarrow W$ such that $gf =\pi$. By construction $g(\im(f)) = V_1$ so $\im(f)$ is not contained in $\Ker(g) = V_2$. Therefore $\im(f) = W$ because $V_2$ is the unique non-zero proper  submodule of $W$. But this means that $W$ is isomorphic to a quotient of $P_1$ and thus $V_2$ occurs as a composition factor of $P_1$, which is a contradiction. This shows that every exact sequence above splits and $H^1\big(\SL(E/Z(E)),E/Z(E)\big) = 1$.
\end{proof}
\begin{rem} \label{remh1}
Let $G$ be a group and $A$ a $\IZ[G]$-module. One knows that $$H^1(G,A) = \cC / \cP\,$$
where 
$$\cC := \{ f : G \rightarrow A \tq f(gh) = f(g)  \pg{g}{f(h)} \quad \forall g, h \in G \}$$
and $$\cP = \{ f : G \rightarrow A \tq  \text{ there exists }a \in A \text{ with }f(g) = a^{-1}\pg{g}{a}\}.$$

The elements of $\cP$ are called the \motdef{principal crossed homomorphisms} or \motdef{$1$-coboundaries} and the elements of $\cC$ the \motdef{crossed homomorphisms} or \motdef{$1$-cocyles}\index{cocyles}.
\end{rem}
\begin{lem} \label{lemhactstriv}
Let $S$ be a subgroup of $ P \rtimes\SL(P/Z)$ such that $S \cap P = 1$ and $N_{P}(S) = Z$. Let $g$ be an element of $E$. Then, elements of $\pg{g}{S} \cap N_{G} (S)$ are of the form $\pg{g}{(\varphi(h) h)}$ where $h$ acts trivially on $g$ and $h$ is an element of $\SL(E/Z)$.    
\end{lem}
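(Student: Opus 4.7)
The plan is to first express $S$ in cocycle form. Because $S\cap P=1$, the projection $\pi:G\to\SL(P/Z)$ restricts to an injection on $S$, so setting $H:=\pi(S)$ every element of $S$ has a unique expression $\varphi(h)h$ with $h\in H$ and $\varphi(h)\in P$, for some cocycle $\varphi:H\to P$. In particular each element of $\pg{g}{S}$ can be written as $\pg{g}{(\varphi(h)h)}$ for some $h\in H\subseteq\SL(P/Z)=\SL(E/Z)$, which already accounts for the second half of the conclusion.

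The key structural step is to use the hypothesis $N_P(S)=Z$ to pin down how $N_G(S)$ sits over $\SL(P/Z)$: namely, the set of elements of $N_G(S)$ whose $\SL(P/Z)$-part equals a given $h\in H$ is exactly the coset $Z\cdot s$, where $s=\varphi(h)h$. Indeed, any two such elements $em,e'm$ satisfy $em(e'm)^{-1}=e(e')^{-1}\in P\cap N_G(S)=N_P(S)=Z$, and the element $s\in S\subseteq N_G(S)$ supplies a concrete representative of this fiber.

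Now fix $s=\varphi(h)h\in S$. Since $g\in P$, conjugation by $g$ does not change the $\SL(P/Z)$-part, so $\pg{g}{s}$ still projects to $h$. By the previous paragraph, $\pg{g}{s}\in N_G(S)$ if and only if $\pg{g}{s}\cdot s^{-1}\in Z$. A direct calculation gives
\[
\pg{g}{s}\cdot s^{-1}=g\,\varphi(h)\,\pg{h}{g^{-1}}\,\varphi(h)^{-1}\in P.
\]
Because $P$ has nilpotency class $2$ (since $[P,P]=[E,E]\leq Z$), the quotient $P/Z$ is abelian and $h\in\SL(P/Z)$ acts $\IF_p$-linearly on it; writing $P/Z$ additively, the $\varphi$-terms cancel and the image reduces to $\bar{g}-h(\bar{g})$. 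Thus $\pg{g}{s}\cdot s^{-1}\in Z$ if and only if $h$ fixes $\bar{g}$ in $P/Z$, which by Lemma \ref{1.1} is equivalent to $\pg{h}{g}=g$, completing the proof.

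The main obstacle is spotting that $N_P(S)=Z$ is precisely what forces each fiber of the projection $N_G(S)\to\SL(P/Z)$ to be a single coset of $Z$. Once this is set up, the normalizer condition collapses to a single commutator equation in $P/Z$, and the class-$2$ structure of $P$ makes the $\varphi$-contribution drop out, leaving only the action of $h$ on $\bar{g}$.
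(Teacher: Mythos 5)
Your proof is correct and follows essentially the same route as the paper: both arguments reduce membership of $\pg{g}{s}$ in $N_G(S)$ to the condition that $g\,\pg{h}{g^{-1}}$ (up to a commutator in $Z$, using $[P,P]=Z(P)$) lies in $N_G(S)\cap P=N_P(S)=Z$, and then invoke Lemma \ref{1.1}. Your "fiber of the projection is a single $Z$-coset" observation is just a repackaging of the paper's step of stripping off the factors $z$ and $\varphi(h)h$, both of which lie in $N_G(S)$.
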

\begin{proof}
Let $s = \varphi(h) h$ be an element of $S$, and $g$ an element of $E$. One needs to know when $\pg{g}{s}$ belongs to $N_{G}(S).$ Using the following calculation,
$$ \pg{g}{s} = g \varphi(h) h g^{-1} = g \varphi(h) \pg{h}{g^{-1}} h = z g  \pg{h}{g^{-1}}  \varphi(h) h \quad \text{for some } z \in Z(P), $$
where the last equality holds because $[P,P] = Z(P)$, one remarks that $\pg{g}{s} \nolinebreak \in \nolinebreak N_{G}(S)$ if and only if $g  \pg{h}{g^{-1}} \in N_{G}(S)$ because $z$ and $ \varphi(h) h$ belong to $N_G(S)$. This holds only if $g  \pg{h}{g^{-1}} \in Z$ as $g  \pg{h}{g^{-1}} \in P$ and $N_G(S) \cap P = Z$ by assumption. But this implies that $h(\overline{g}) = \overline{g}$ in $P/Z(P)$. Therefore $h$ acts trivially on $g$ by Lemma \ref{1.1}.
\end{proof}
\begin{thm} \label{psl2exp} \label{thermk}
Let $G := P \rtimes\SL(P/Z)$, then $G$ has no non-trivial expansive subgroup with trivial $G$-core. 
\end{thm}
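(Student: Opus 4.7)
The plan is to show that any non-trivial subgroup $T$ of $G$ with trivial $G$-core fails to be expansive, by exhibiting an element $g \notin N_G(T)$ with $(\pg{g}{T}) \cap N_G(T) \leq T$. First, since $Z = Z(G)$, the triviality of the $G$-core forces $T \cap Z = 1$, so $T \cap P$ injects into $P/Z \cong \IF_p^2$. Using that every element of $P$ of order $p$ lies in $E$ (since for $e \notin Z(E)$ and $z \in Z \setminus Z(E)$ the element $ez$ has order $>p$) and that in the extraspecial group $E$ every maximal subgroup contains $Z(E) = [E,E]$, one concludes that $T \cap P$ has order $1$ or $p$, being either trivial or a non-central cyclic subgroup of $E$. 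Up to $G$-conjugation, I may assume $T \cap P \in \{1, T_0\}$ with $T_0 = \gen{f_1}$.

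First suppose $T \leq P$, so $T = T_0$. By Lemma \ref{lemngtsp}, $N_G(T_0) = (T_0 \times Z) \rtimes B$ for the upper-triangular Borel $B$ of $\SL(P/Z)$. The choice $g := \tau = \bigl(\begin{smallmatrix} 1 & 0 \\ 1 & 1 \end{smallmatrix}\bigr)$ works: since $\tau(\bar f_1) = \bar f_1 + \bar f_2 \notin \bar f_1 \IF_p$, no non-identity power of $\pg{\tau}{f_1}$ lies in $T_0 Z$, so $(\pg{\tau}{T}) \cap N_G(T) = 1$. When instead $T \cap P = 1$, write $T = \{\varphi(h)h : h \in H\}$ with $H := \pi(T) \neq 1$. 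If $H$ acts on $P/Z$ without non-zero fixed vectors, then by Lemma \ref{cebceh} one has $N_P(T) = Z$, and Lemma \ref{lemhactstriv} applies: any $g \in E \setminus Z$ yields $(\pg{g}{T}) \cap N_G(T) = \{1\}$, with $g \notin N_G(T)$ by Lemma \ref{lemngs}. Otherwise $H$ is contained in a line-stabilizer of $P/Z$, so up to conjugacy $H \leq U$, and since $U$ is cyclic of order $p$ one has $H = U$; then $g = \tau$ works because $\pg{\tau}{\sigma^k}$ has non-zero lower-left entry for $k \neq 0$ and hence lies outside $N_{\SL(P/Z)}(U) = B$, so by Lemma \ref{lemngs} the $\tau$-conjugate of each non-identity element of $T$ is not in $N_G(T)$.

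In the remaining case $T \cap P = T_0$ with $T \supsetneq T_0$, the modular law applied to $T \leq N_G(T_0) = (T_0 \times Z) \rtimes B$ gives $T = T_0 \cdot \{\psi(h)h : h \in H\}$ for a subgroup $H \leq B$ and a homomorphism $\psi : H \to Z$ (since $B$ acts trivially on $Z$ and $T \cap Z = 1$). Again take $g = \tau$. As in the first case, $\pg{\tau}{t_0} \notin T_0 Z$ for $t_0 \neq 1$, so elements of $(\pg{\tau}{T}) \cap N_G(T_0)$ take the form $\psi(h) \cdot \pg{\tau}{h}$ with $h \in H$ and $\pg{\tau}{h} \in B$; a direct calculation shows these conditions force $h$ to lie in the twisted torus $H_* := \bigl\{ h_\lambda = \bigl(\begin{smallmatrix} \lambda & \lambda - \lambda^{-1} \\ 0 & \lambda^{-1} \end{smallmatrix}\bigr) : \lambda \in \IF_p^* \bigr\}$, and then $\pg{\tau}{h_\lambda} = h'_\lambda := \bigl(\begin{smallmatrix} \lambda^{-1} & \lambda - \lambda^{-1} \\ 0 & \lambda \end{smallmatrix}\bigr)$. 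For $\lambda = \pm 1$ one has $h'_\lambda = h_\lambda$, and the element automatically lies in $T$. For $\lambda \neq \pm 1$, I split on $H$: if $H \leq H_*$, then $h'_\lambda$ lies in a distinct maximal torus $\pg{\tau}{H_*}$ whose intersection with $H_*$ is $\{\pm I\}$, so $h'_\lambda \notin N_{\SL(P/Z)}(H) \cap B = H_*$ and hence $\psi(h) \cdot h'_\lambda \notin N_G(T)$; otherwise $H$ necessarily contains $U$, and then $[H,H] \supseteq U$ forces $\psi$ to factor through a $D$-quotient which must vanish by $\gcd(p-1, p^i) = 1$, so $\psi = 0$ and $h'_\lambda \in U \cdot (\text{\(D\)-part of }H) \leq H \leq T$. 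Hence $(\pg{\tau}{T}) \cap N_G(T) \leq T$ in all subcases.

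The main obstacle is Case 3, where the structure of $T$ as an extension of $T_0$ by a subgroup $H \leq B$ with a cocycle $\psi$ is intricate. The argument hinges on two key facts: that $\tau$-conjugation acts on $H_*$ as the involution $\lambda \leftrightarrow \lambda^{-1}$, exchanging $H_*$ with a distinct maximal torus meeting it only in $\{\pm I\}$; and that $\gcd(p-1, p^i) = 1$ kills any $D$-valued homomorphism into $Z$, forcing $\psi$ to vanish precisely in those cases where $H$ contains both a ``bad'' element $h_\lambda$ with $\lambda \neq \pm 1$ and a non-trivial unipotent part.
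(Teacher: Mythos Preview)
Your argument has a genuine gap in the case $T\cap P=1$ when $H:=\pi(T)$ has no non-zero fixed vector on $P/Z$. You assert that Lemma~\ref{lemhactstriv} gives $(\pg{g}{T})\cap N_G(T)=\{1\}$ for any $g\in E\setminus Z$, but that lemma only tells you that every element of this intersection has the form $\pg{g}{(\varphi(h)h)}$ with $h$ fixing $g$. The set of such $h$ is the stabiliser $\mathrm{Stab}_H(\bar g)$, and this is non-trivial whenever $H$ contains a transvection fixing $\bar g$. In particular, for $H=\SL(P/Z)$ every non-zero $\bar g$ has stabiliser of order~$p$, so no choice of $g\in E\setminus Z$ makes the intersection trivial. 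Worse, for such $h$ one computes
\[
\pg{g}{(\varphi(h)h)}=[g,\varphi(h)]\,\varphi(h)h,
\]
and since $T\cap Z=1$ this lies in $T$ only when $[g,\varphi(h)]=1$, i.e.\ when $\overline{\varphi(h)}\in\IF_p\bar g$. Nothing you have written forces this.

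This is precisely the point where the paper's proof does real work. When at least one of the stabilisers $\mathrm{Stab}_H(\bar f_1)$, $\mathrm{Stab}_H(\bar f_2)$ is trivial, the paper uses the corresponding $f_i$ (so for $H\lneq\SL(P/Z)$ your idea can be repaired by choosing $g$ off the unique line fixed by the Sylow $p$-subgroup of $H$, rather than ``any $g$''). But when both are non-trivial, $H$ contains two distinct transvection groups and hence equals $\SL(P/Z)$; the paper then invokes Lemma~\ref{lemh1}, namely $H^1\bigl(\SL(P/Z),P/Z\bigr)=1$, to write $\overline{\varphi(h)}=\bar a^{-1}\,h(\bar a)$ for some $a\in P$, and uses $g=a$ to obtain $\pg{a}{T}\cap N_G(T)\le T$. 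Your proof needs this (or an equivalent) ingredient to close the case $H=\SL(P/Z)$.

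Your handling of the remaining cases looks correct, and in Case~3 you take a genuinely different route from the paper: you conjugate by $\tau=\bigl(\begin{smallmatrix}1&0\\1&1\end{smallmatrix}\bigr)$ and analyse the twisted torus $H_*=\tau^{-1}B\tau\cap B$, whereas the paper uses $w=\bigl(\begin{smallmatrix}0&1\\-1&0\end{smallmatrix}\bigr)$ and reduces to diagonal elements. Both approaches work, and yours has the pleasant feature of treating Cases~1 and~3 with the same conjugating element.
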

\begin{proof}
Let $S$ be a non-trivial expansive subgroup of $G$ with trivial $G$-core. We must have $S \cap Z = 1$, otherwise $Z$ would be contained in the $G$-core of $S$. So only two cases are possible:
\begin{enumerate}
\item $S \cap P =1$ or 
\item $S \cap P =T$ for $T$ a non-central $p$-subgroup of $E$ of order $p$.
\end{enumerate}
In the first case, one can check that $S$ is a subgroup of the following form :
$$ \big\{ \varphi(h) h \st h \in H \big \} \text{ where } \varphi : H \rightarrow P,$$
for $H\leq \SL(P/Z)$ and $\text{with } \varphi(hk) = \varphi(h) \pg{h}{\varphi(k)} \text{ for all } h,k \in H$, i.e. $\varphi$ is a $1$-cocycle.

Assume first that $\varphi =1$ and so $S=H$. By Lemma \ref{lemnps}, $N_P(S) = N_E(S) Z$, 
so only two cases are possible $N_P(S) = Z$ or $N_P(S) = Z \times Q$ for $Q$ a non-central $p$-subgroup of $E$. Indeed, by the structure of subgroups of $E$, the subgroup $N_E(S)$ could only be $Z(E)$ or $Z(E) \times Q$.  To start, suppose that  $Z= N_P(S)$. Let $g$ be an element of $E$ but not in $N_G(S)$. So we can write $\pg{g}{S}$ as $\big\{ g \pg{s}{g^{-1}} s \st s \in S \big \}$ and using Lemma \ref{lemngp} we have
$$ \pg{g}{S} \cap N_G(S) =  \big\{ g \pg{s}{g^{-1}} s \st  g \pg{s}{g^{-1}} \in Z  \text{ and } s \in S \big \}. $$
If $g \pg{s}{g^{-1}} \in Z$ then $s$ acts trivially on $\overline{g}$ in $P/Z$, which means that $s$ acts trivially on $g$ by Lemma \ref{1.1}. So the fact that $g \pg{s}{g^{-1}} $ belongs to $Z$ implies that $g \pg{s}{g^{-1}} = 1$ as well as $\pg{g}{s} = s$ and thus $\pg{g}{S} \cap N_G(S) \leq S$. This shows that $S$ is not expansive.

Now suppose that $N_P(S) = Z \times Q$ for $Q$ a non-central $p$-subgroup of $E$, which implies that $S$ acts trivially on $Q$ and so, up to conjugation, we have
$$S \leq \Big \{ \begin{pmatrix}
1 &\alpha \\
0&1
\end{pmatrix}
\st  \alpha \in \IF_p  \Big \}.$$

Moreover, $S=1$ is excluded by assumption, so we have equality. Let $s$ be an element of $S$ and $g$ be 
$$\begin{pmatrix}
0&1 \\
-1&0
\end{pmatrix}.$$
Then $$\pg{g}{s} = \begin{pmatrix}
0&1 \\
-1&0
\end{pmatrix}
 \begin{pmatrix}
1&\alpha_s \\
0&1
\end{pmatrix}
\begin{pmatrix}
0&-1 \\
1&0
\end{pmatrix} = \begin{pmatrix}
1 &0 \\
-\alpha_s&1
\end{pmatrix}$$
which is an element of $N_{\SL(P/Z)}(S)$ only if $\alpha_s = 0$. Therefore one has $\pg{g}{s}= 1 = s$ and $\pg{g}{S} \cap  N_{\SL(P/Z)}(S) \leq S$.

This shows that $S$ is not expansive if $\varphi =1$.

Assume now that $\varphi \neq 1$. Again, by Lemma \ref{lemnps}, one has only two possibilities for $N_P(S)$, either $N_P(S) = QZ$ or $N_P(S) = Z$. In the first case, by Lemmas \ref{lemat} and \ref{cebceh}, $H$ acts trivially on $Q$ and therefore, up to conjugation, $H$ is $$\Big \{ \begin{pmatrix}
1 &\alpha \\
0&1
\end{pmatrix}
\st  \alpha \in \IF_p  \Big \}.$$
Let $g = \begin{pmatrix}
0&1 \\
-1&0
\end{pmatrix}$ and $s = \varphi(h) h$ an element of $S$.
If $\pg{g}{s} = \pg{g}{\varphi(h)}\pg{g}{h}$ belongs to $\pg{g}{S} \cap  N_{G}(S)$ then $\pg{g}{h} \in N_{\SL(P/Z)}(H)$ by Lemma \ref{lemngs}. Write $h$ as $\begin{pmatrix}
1&\alpha \\
0&1
\end{pmatrix}$  then 
$$\pg{g}{h} = \begin{pmatrix}
0&1 \\
-1&0
\end{pmatrix}
 \begin{pmatrix}
1&\alpha \\
0&1
\end{pmatrix}
\begin{pmatrix}
0&-1 \\
1&0
\end{pmatrix} = \begin{pmatrix}
1 &0 \\
-\alpha&1
\end{pmatrix}$$
which is an element of $N_{\SL(P/Z)}(H)$ only if $\alpha = 0$ and so $\pg{g}{h}= h = 1$. Thus $s = 1$ and therefore we conclude $\pg{g}{S} \cap  N_{G}(S) =1$ and $\pg{g}{s} = s$. 

Next we suppose that $N_P(S)$ is equal to $Z$. Recall one has fixed a basis $\gen{\bar{f_1}, \bar{f_2}}$ of $E/Z(E)$. 
Let $s = \varphi(h) h$ be an element of $S$,  and $g = f_1$ a representative of $\bar{f_1}$ in $E$. One needs to know when $\pg{g}{s}$ belongs to $N_{G}(S).$ By Lemma \ref{lemhactstriv}, one knows that
 $h(\overline{g}) = \overline{g}$ in $P/Z$. The same argument works for $g=f_2$. Therefore, using Lemma \ref{1.1}, one has
$$ \pg{f_1}{S} \cap N_G(S) \leq \{ \pg{f_1}{(\varphi(h)} h) \tq h \in H \text{ and } \pg{h}{f_1} = f_1  \}  \text{ and }$$
 $$\pg{f_2}{S} \cap N_G(S) \leq \{ \pg{f_2}{(\varphi(k)} k) \tq k \in H \text{ and } \pg{k}{f_2} = f_2 \}.$$
These sets are isomorphic to a unipotent group of order $p$ as they act trivially on $f_1$ respectively $f_2$. Moreover, either one of these intersections is trivial and then one takes respectively $g$ to be $f_1$ or $f_2$, or both intersections are not trivial. In the latter case, $H = \SL(P/Z)$ since $H$ contains two different transvections, the one which acts trivially on $f_1$ and the one which acts trivially on $f_2$. By Lemma \ref{lemh1} , $H^1\big(\SL(P/Z),P/Z \big) = 1$ and so there exists $\overline{a} \in P/Z$ such that $\overline{\varphi(h)} = \overline{a}^{-1} h(\overline{a}) $ for all $h \in \SL(P/Z)$, see Remark \ref{remh1}. The element $\bar{a}$ is not trivial otherwise it is the case where $\varphi =1$ that has been treated before. Thus $a$ does not belongs to $Z = N_E(S)$. Then, for a fixed $h$ in $\SL(P/Z)$ there exists an element $z_h \in Z$ such that $\varphi(h) = z_h a^{-1} \pg{h}{a}$. Let $g$ be equal to $a$ which is as mentioned not an element of $N_G(S)$. One looks at $\pg{a}{S} \cap N_G(S)$. Let $s$ be an element of $S$, then $s =  z_h a^{-1} \pg{h}{a} h$ and the fact that $\pg{a}{s}$ has to belong to $N_G(S)$ implies that $h$ acts trivially on $a$. This is the same reasoning as above for $\pg{f_1}{s}$. So $s = z_h h$ and $\pg{a}{s} = z_h h = s \in S$, which proves that $\pg{a}{S} \cap N_G(S) \leq S.$

For the second case, namely $S \cap P = T$ for $T$ a non-central $p$-subgroup of $E$ of order $p$, remark that by Lemma \ref{lemngtsp}, one has up to conjugation
$$N_G(S) \leq N_G(T) = ( T \times Z ) \rtimes  \{ \begin{pmatrix}
\lambda&\alpha \\
0&\lambda^{-1}
\end{pmatrix}
\st \lambda \in \IF_p^{*} \text{ and }  \alpha \in \IF_p  \},$$ 
where the basis is chosen with the first element in $T$.
Let $s$ be an element of $S$ and $g = \begin{pmatrix}
0&1 \\
-1&0
\end{pmatrix}$. One can write $s$ as $tz \begin{pmatrix}
\lambda_s &\alpha_s \\
0&\lambda_s^{-1}
\end{pmatrix}$ for some $t$ in $T$ and $z$ in $Z$. Then, the fact that the element $\pg{g}{s}$ belongs to $N_G(S)$ implies that $\alpha_s = 0$ and $t = 1$. Indeed, $\pg{g}{t}z \pg{g}{\begin{pmatrix}
\lambda_s&\alpha_s \\
0&\lambda^{-1}_s
\end{pmatrix}}$ belongs to $N_G(S)$ only if $\pg{g}{t}z$ belongs to $ T \times Z$ and $\pg{g}{\begin{pmatrix}
\lambda_s&\alpha_s \\
0&\lambda^{-1}_s
\end{pmatrix}}$ to $\{ \begin{pmatrix}
\lambda&\alpha \\
0&\lambda^{-1}
\end{pmatrix}
\st \lambda \in \IF_p^{*} \text{ and }  \alpha \in \IF_p  \}$. With the following calculation

$$ \begin{pmatrix}
0&1 \\
-1&0
\end{pmatrix}
 \begin{pmatrix}
\lambda_s &\alpha_s \\
0&\lambda^{-1}_s
\end{pmatrix}
\begin{pmatrix}
0&-1 \\
1&0
\end{pmatrix} = \begin{pmatrix}
\lambda^{-1}_s &0 \\
-\alpha_s&\lambda_s
\end{pmatrix}$$
one sees that the latter occurs only if $\alpha_s = 0$. Moreover $\pg{g}{t}z$ belongs to $ T \times Z$ only if $t=1$ as $g$ sends $t$ to $T^c$. 

Therefore $s = zm := z \begin{pmatrix}
\lambda_s& 0 \\
0&\lambda^{-1}_s
\end{pmatrix}$ and so, because $o(z)$ divides $p^i$ and $\lambda_s^p = \lambda_s$, the element $s^{o(z)} =\begin{pmatrix}
\lambda_s& 0 \\
0&\lambda_s^{-1}
\end{pmatrix}$ belongs to $S$. This implies that $sm^{-1} = z$ belongs to $S$ as a product of elements of $S$. Therefore $z = 1$ because one has $S \cap P = T$. Finally, we have $s = \begin{pmatrix}
\lambda_s& 0 \\
0&\lambda_s^{-1}
\end{pmatrix}$ and $\pg{g}{s} = \begin{pmatrix}
\lambda_s^{-1} & 0 \\
0&\lambda_s
\end{pmatrix} = s^{-1}$ which is an element of $S$ and so  $\pg{g}{S} \cap N_G(S) \leq S.$
\end{proof}
\begin{rem}
In the case $\varphi \neq 1$ and $N_P(S) = Z$, one has to notice that one used $\pg{f_1}{S} \cap N_G(S)$ and $\pg{f_2}{S} \cap N_G(S)$ to obtain information on $S$. But at the end one proves the non-expansivity of $S$ by looking at $\pg{a}{S} \cap N_G(S)$. 
\end{rem}
\begin{thm} \label{pkexp}
Let $G := P \rtimes K$, with $K$ a subgroup of $\SL(P/Z)$. Then $G$ has no non-trivial expansive subgroup with trivial $G$-core if and only if $K$ is not contained in a Borel subgroup of $\SL(P/Z)$.
\end{thm}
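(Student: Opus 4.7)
The plan is to prove the two directions of the biconditional separately.

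For the backward direction, suppose $K$ lies in some Borel of $\SL(P/Z)$. After replacing $K$ by a conjugate (which yields an isomorphic $G$), I may assume $K$ is upper-triangular, so $K$ normalizes $T := \langle f_1 \rangle$. I claim $T$ is a non-trivial expansive subgroup of $G$ with trivial $G$-core. The $G$-core is trivial because the commutator $\pg{f_2}{f_1} = f_1 z_0^{-1}$ in the Heisenberg group $E$ (for $z_0$ a generator of $Z(E)$) shows $\pg{f_2}{T} \neq T$, so $T$ is not normal and has prime order. By Lemmas \ref{lemngp} and \ref{lemnps}, $N_G(T) = TZ \rtimes K$. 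For any $g \not\in N_G(T)$, since $K$ normalizes $T$ I may write $g = pk'$ with $p \in P \setminus TZ$; the analogous commutator computation gives $\pg{g}{T} = \pg{p}{T} = \langle f_1 z_0^{-b}\rangle \leq TZ \leq N_G(T)$, where $b \neq 0$ is the $\overline{f_2}$-coordinate of $\overline{p}$. Thus $(\pg{g}{T} \cap N_G(T))T = T \cdot Z(E)$, which is normal in $N_G(T)$ (since $TZ$ is abelian and $K$ stabilizes $T$ while acting trivially on $Z \supseteq Z(E)$), equals its own $N_G(T)$-core, and properly contains $T$.

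For the forward direction, suppose $K$ is not contained in any Borel. The Dickson-type observation that any subgroup of $\SL_2(\IF_p)$ containing an element of order $p$ but not contained in a Borel equals $\SL_2(\IF_p)$ (because two distinct Sylow $p$-subgroups generate the whole group) implies that either $K = \SL(P/Z)$---in which case Theorem \ref{psl2exp} gives the claim---or $K$ has no $p$-torsion. Assume henceforth the latter, and let $S \leq G$ be non-trivial with trivial $G$-core; so $S \cap Z = 1$, and either $S \cap P = 1$ or $S \cap P = T$ is a non-central cyclic subgroup of $E$ of order $p$. In the case $S \cap P = 1$, write $S = \{\varphi(h)h : h \in H\}$ as in Lemma \ref{lemngs}. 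Since $|H|$ is coprime to $p$ (and hence to $|Z|$), $H^1(H, P/Z) = H^1(H, Z) = 0$, so $\varphi$ is a coboundary and $S = a^{-1} H a$ for some $a \in P$; by $G$-conjugation-invariance of expansivity, it suffices to show $H$ is not expansive. If $C_P(H) \supsetneq Z$, then $H$ would fix a non-zero vector in $P/Z$ and hence lie inside a Sylow $p$-subgroup of $\SL(P/Z)$, contradicting that $H$ has no $p$-torsion; so $N_P(H) = C_P(H) = Z$, and the direct argument of Theorem \ref{psl2exp} (any $g \in E \setminus Z(E)$ is a witness) shows $H$ is not expansive.

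In the case $S \cap P = T$, since $K$ has no $p$-torsion, $N_K(T) = K \cap B_T$ is cyclic, and a short calculation shows that all of its non-central elements are diagonalizable over $\IF_p$ and share both eigenlines (one being $\overline{T}$), so $N_K(T)$ lies in one maximal torus $T_{\overline{T}, \ell_K}$ of $B_T$. By Schur--Zassenhaus (with $|T|$ and $|H'|$ coprime, where $H' := S/T \leq N_K(T)$) and the vanishing of $H^1(H', TZ)$, up to $G$-conjugation $S = T \rtimes H'$. Take $k \in K \setminus B_T$. For $s = th \in S$ to satisfy $\pg{k}{s} \in N_G(S)$, the condition $\pg{k}{t} \in TZ$ forces $t = 1$ (since $\pg{k}{T} \cap TZ = 1$) and the condition $\pg{k}{h} \in N_K(T)$ forces $h \in H' \cap T_{\overline{T}, k^{-1}\overline{T}}$. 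A case split on whether $\ell_K = k^{-1}\overline{T}$, and in that case whether $k^2 \in B_T$, shows $\pg{k}{h} \in S$ in every situation: if the two tori differ, $h$ lies in $\{\pm I\}$ so $\pg{k}{h} = h$; if they coincide and $k^2 \in B_T$, then $k$ swaps the eigenlines of the common torus, giving $\pg{k}{h} = h^{-1} \in H'$; if they coincide but $k^2 \notin B_T$, the intersection $T_{\overline{T}, k\overline{T}} \cap T_{\overline{T}, \ell_K} = \{\pm I\}$ forces $\pg{k}{h} \in \{\pm I\}$, hence $h \in \{\pm I\}$. Thus $\pg{k}{S} \cap N_G(S) \leq S$, so $S$ is not expansive.

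The main technical obstacle is this last case: the original proof of Theorem \ref{psl2exp} exploits $\begin{pmatrix} 0 & 1 \\ -1 & 0\end{pmatrix}^2 = -I \in B_T$ to obtain $\pg{g}{m} = m^{-1}$, an identity that a general $k \in K \setminus B_T$ need not satisfy. The resolution uses that $N_K(T)$, having no $p$-torsion, lies in a single maximal torus of $B_T$, so the possible $h$ in $H' \cap T_{\overline{T}, k^{-1}\overline{T}}$ are forced to be either the whole of $H'$ (when the two tori coincide) or just $\{\pm I\}$ (when they differ); the case analysis above then verifies $\pg{k}{h} \in S$ in both situations.
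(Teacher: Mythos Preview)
Your proof is correct and follows the same overall architecture as the paper: same construction of $T$ for the backward direction, same Dickson-type reduction to $p\nmid |K|$, same split into $S\cap P=1$ versus $S\cap P=T$, and the same Schur--Zassenhaus reductions. The one substantive difference is your handling of the final case $S\cap P=T$. The paper observes that $C:=N_K(T)=K\cap B_T$ is cyclic (being a $p'$-subgroup of a Borel), so once one knows $\pg{g}{m}\in C$ and $m\in D\le C$, the equality of orders $|\pg{g}{m}|=|m|$ immediately gives $\pg{g}{m}\in D$, since a cyclic group has a unique subgroup of each order. You instead locate $N_K(T)$ inside a maximal torus $T_{\overline T,\ell_K}$ and run a three-way case split on the relation between $\ell_K$, $k^{-1}\overline T$, and $k\overline T$. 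This torus analysis is valid, but it reproves by hand what the cyclicity of $C$ gives for free; the paper's argument is shorter and avoids the edge cases (e.g.\ $N_K(T)\subseteq\{\pm I\}$) that you implicitly have to cover.

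One minor point worth tightening: in the case $S\cap P=1$ you assert that $H^1(H,P/Z)=H^1(H,Z)=0$ yields $S=a^{-1}Ha$. This is correct, but the passage from the vanishing over $P/Z$ to the full coboundary in $P$ deserves a line: lifting the $P/Z$-coboundary gives $\varphi(h)=a^{-1}\,\pg{h}{a}\,z_h$ with $z_h\in Z$, and the cocycle identity together with centrality of $Z$ and triviality of the $H$-action on $Z$ shows $h\mapsto z_h$ is a homomorphism $H\to Z$, hence trivial by coprimality. The paper sidesteps this by invoking Schur--Zassenhaus directly in the (non-abelian) group $P\rtimes H$.
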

\begin{proof}
Suppose first that $K$ is contained in a Borel subgroup of $\SL(P/Z)$. Let $T$ be the $p$-subgroup of $E$ of order $p$ normalized by the Borel subgroup. Then, the normalizer $N_G(T)$ is $Z \times T \rtimes K$ and for all $g$ in $G$ but not in $N_G(T)$ we have
$$ (N_G(T) \cap \pg{g}{T})T = \pg{g}{T}T = Z \times T$$ 
because  $\pg{g}{T}$ is contained in $ Z \times T$ but not equal to $T$. Then the $N_G(T)$-core of $Z \times T$ is $Z \times T$ and so $T$ is an expansive subgroup with trivial $G$-core.

Conversely, suppose that $K$ is not contained in a Borel subgroup of $\SL(P/Z)$. If $p$ divides $\abs{K}$ then $K = \SL(P/Z)$. Indeed, the number of $p$-Sylow subgroups of $K$ is either $1$ or $p+1$. In the first case, the $p$-Sylow subgroup, denoted by $U$, is normal. We would have, up to conjugation, that 
$$U = \Big \{ \begin{pmatrix}
1 &\alpha \\
0&1
\end{pmatrix}
\st  \alpha \in \IF_p  \Big \} \leq K \leq  N_{\SL(P/Z)}(U).$$
So $K$ would be contained in a Borel subgroup which is impossible by assumption. Moreover, if the number of $p$-Sylow subgroups is $p+1$, then $K$ contains all the transvections which generate $\SL(P/Z)$. The case $K= \SL(P/Z)$ has already been treated therefore we can assume that $p$ doesn't divide the order of $K$. 

Let $S$ be a non-trivial subgroup with trivial $G$-core. We must have $S \cap Z(P) = 1$, otherwise $Z(P)$ would be contained in the $G$-core of $S$. So only two cases are possible:
\begin{enumerate}
\item $S \cap P =1$ or 
\item $S \cap P =T$ for $T$ a non-central $p$-subgroup of $E$ of order $p$.
\end{enumerate}
We start with $S \cap P =1$. As $p$ does not divide the order of $K$ then $H^1(S,E) \nolinebreak = \nolinebreak 1$ and so up to conjugation $S$ is a subgroup of $K$. Obviously, we know that $Z(P) \leq N_P(S) \lneqq P$. By Lemma \ref{lemnps}, $N_P(S) = N_E(S) Z(P)$, so only two cases are possible $N_P(S) = Z(P)$ or $N_P(S) = Z(P) \times Q$ for $Q$ a non-central $p$-subgroup of $E$. Indeed, by the structure of subgroups of $E$, the subgroup $N_E(S)$ could only be $Z(E)$ or $Z(E) \times Q$.  To start suppose that  $Z(P)= N_P(S)$. Let $g$ be an element of $E$ but not in $N_G(S)$. So we can write $\pg{g}{S}$ as $\big\{ g \pg{s}{g^{-1}} s \st s \in S \big \}$ and using Lemma \ref{lemngp} we have
$$ \pg{g}{S} \cap N_G(S) =  \big\{ g \pg{s}{g^{-1}} s \st  g \pg{s}{g^{-1}} \in Z(P)  \text{ and } s \in S \big \}. $$
If $g \pg{s}{g^{-1}} \in Z(P)$ then $s$ acts trivially on $\overline{g}$ in $P/Z(P)$, which means that $s$ acts trivially on $g$ by Lemma \ref{1.1}. So the fact that $g \pg{s}{g^{-1}} $ belongs to $Z(P)$ implies that $g \pg{s}{g^{-1}} = 1$ and thus $\pg{g}{S} \cap N_G(S) \leq S$. This shows that $S$ is not expansive.

Now suppose that $N_P(S) = Z(P) \times Q$ for $Q$ a non-central $p$-subgroup of $E$, which implies that $S$ acts trivially on $Q$ and so, up to conjugation, we have
$$S \leq \Big \{ \begin{pmatrix}
1 &\alpha \\
0&1
\end{pmatrix}
\st  \alpha \in \IF_p  \Big \}.$$
Moreover, $S=1$ is excluded by assumption, so we have equality. But then $p$ divides the order of $S$ and so of $K$, which is impossible. So this case can actually not occur. 

For the second case, namely $S \cap P = T$, one observes that
\begin{eqnarray*}
N_G(S) &\leq & N_G(T) = ( T \times Z(P) ) \rtimes C \\ &:=& ( T \times Z(P) ) \rtimes  \{ \begin{pmatrix}
\beta&\alpha \\
0&\beta^{-1}
\end{pmatrix}
\st \beta \in \IF_p^{*} \text{ and }  \alpha \in \IF_p  \}\cap K \\
&\leq & ( T \times Z(P) ) \rtimes \{ \begin{pmatrix}
\beta & (\beta^{-1} - \beta) \gamma \\
0&\beta^{-1}
\end{pmatrix}
\st \beta \in \IF_p^{*} \}.
\end{eqnarray*} 
The last inclusion holds for a fixed $\gamma$ because $p$ does not divide the order of $K$. Note that here the first vector of the basis belongs to $T$. Moreover by Schur-Zassenhaus lemma, $S$ is of the form $T \rtimes D$ where, up to conjugation by an element of $N_G(T)$, $D$ is a subgroup of $C$. As $Z$ acts trivially on $D$ and $T$ is contained in $S$, one can assume that $D$ is a subgroup of $C$. Let's prove that $S$ is not expansive. Let $s$ be an element of $S$ and $g$ an element of $K$ but not in $N_G(T\times Z)$. This element exists because $K$ is not contained in a Borel subgroup. One can write $s$ as $tm:= t \begin{pmatrix}
\lambda & (\lambda^{-1} - \lambda) \gamma \\
0&\lambda^{-1}
\end{pmatrix}$ for some $t$ in $T$ and $\lambda \in \IF_p^{*}$. Then, the fact that the element $\pg{g}{s}$ belongs to $N_G(S)$ implies that $t = 1$. Indeed, $\pg{g}{t}$ belongs to $ T \times Z(P)$ is only possible if $t=1$ as $g$ does not belong to $N_G(T \times Z)$. 

Therefore $\pg{g}{s}$ is reduced to $\pg{g}{m}$ which belongs to $C$ as it belongs to $N_G(S)$. But $m$ belongs to $C$ as well as it belongs to $D$. Finally, since $C$ is cyclic we conclude that if the element $\pg{g}{m}$ belongs to $C$ then it must actually belong to $D$, by simply looking at its order, as it is the same as the order of $m$. Thus $\pg{g}{s}$ belongs to $S$. Finally, one concludes that $\pg{g}{S} \cap N_G(S) \leq S$ and therefore $S$ is not expansive.
\end{proof}
\begin{rem}
One tried to look at more examples of groups containing an extraspecial group in the Fitting subgroup. For example, let $p$ and $q$ be two different prime numbers and $E_p$, respectively $E_q$, denotes an extraspecial group of order $p^3$, respectively $q^3$ and exponent $p$, respectively $q$. Let $G_1$ be $E_p \rtimes \SL(E_p/Z_p)$ and $G_2$ be $E_q \rtimes \SL(E_q/Z_q)$. Finally let $G$ the direct product of $G_1$ and $G_2$. Then one proves that $G$ has no non-trivial expansive subgroup $S$ with $S \cap (E_p \times E_q) = 1$.

Another example, let $E$ be an extraspecial group of order $p^{1+2n}$ and exponent $p$ for an odd prime $p$ and $G$ be $E \rtimes \Sp(E/Z)$ one proved that if $S$ is a subgroup of $\Sp(E/Z)$, then $S$ is not expansive. One has proved a similar result for subgroups $S$ such that $S \cap E = 1$ with added conditions.

However the result is not true if the extraspecial group if a $2$-group. Indeed, the group $Q_8 \rtimes S_3$ is a Roquette group. The Fitting subgroup is $Q_8$, an extraspecial $2$-group and in \cite{B-T} it is shown that $S_3$ is an expansive subgroup. 
\end{rem}

Alex Monnard, Section de math\'ematiques, EPFL,\\
Station 8, CH-1015 Lausanne, Switzerland. \\
\tt Alex.Monnard@epfl.ch


\end{document}